\newcommand{\B}{{\mathbb B}}
\newcommand{\C}{{\mathbb C}}
\newcommand{\K}{{\overline {K}}}
\newcommand{\point}{\ensuremath{\xymatrix{A\ar@<+.6ex>[r]^(.5){\alpha}
&B\ar@<+.6ex>[l]^(.5){\beta}}}}
\newcommand{\rg}{\ensuremath{\xymatrix{A\ar@<+1ex>[r]^{\alpha}\ar@<-1ex>[r]_{\gamma}&B\ar[l]|{\beta}}}}
\newtheorem{Theorem}{Theorem}[section]
\newtheorem{Lemma}[Theorem]{Lemma}
\newtheorem{Proposition}[Theorem]{Proposition}
\newtheorem{Definition}[Theorem]{Definition}
\newtheorem{Corollary}[Theorem]{Corollary}
\theoremstyle{remark}
\theoremstyle{plain}
\newtheorem{Example}[Theorem]{Example}
\keywords{normal subobject, ideal, commutator, semi-abelian} \amsclass{08A30, 18A20, 08A50}
\address{Dipartimento di Matematica, Universit\`a degli Studi di
Milano, Via C. Saldini, 50, 20133  Milano, Italia}
\begin{document}

\newenvironment{changemargin}[2]{\begin{list}{}{
\setlength{\topsep}{0pt}
\setlength{\leftmargin}{0pt}
\setlength{\rightmargin}{0pt}
\setlength{\listparindent}{\parindent}
\setlength{\itemindent}{\parindent}
\setlength{\parsep}{0pt plus 1pt}
\addtolength{\leftmargin}{#1}\addtolength{\rightmargin}{#2}
}\item}{\end{list}}

\title{Normalities and Commutators}
\author{Sandra Mantovani, Giuseppe Metere}


\maketitle

\begin{abstract}
We first compare several algebraic notions of normality, from a categorical viewpoint. Then we introduce an
intrinsic description of Higgins' commutator for ideal-determined categories, and we define a new notion of normality in terms of
this commutator. Our main result is to extend to any semi-abelian category
the following well-known characterization of normal subgroups: a subobject $K$ is normal in $A$ if, and only if, $[A,K]\leq K$.
\end{abstract}

\section{Introduction}
The notion of \emph{ideal} is central in many algebraic
disciplines. As recalled in \cite{JMU07}, in universal algebra
ideals were introduced by Higgins \cite{Higg_commutator} for
$\Omega$-groups, and by Magari \cite{Magari} in a more general
setting (where not all ideals have to be  kernels). Subsequently,
Agliano and Ursini \cite{Agliano_Ursini} defined \emph{clots}, a
 notion lying in-between kernels and ideals.

It was only in 2007 that  Janelidze, M\'arki and Ursini
\cite{JMU07} reviewed from a categorical perspective the
relationship among normal subobjects, clots and ideals. They
described these three concepts in an intrinsic setting, by using
the notion of internal action
\cite{BJ_PROT_DESC_SEMIDIRECT_P,BJK_INT_OBJ_ACTS}. They showed
that the relationships
$$
N(A)\subseteq C(A) \subseteq I(A)
$$
among the sets of normal subobjects, clots and ideals in an object
$A$, become equalities in the semi-abelian case (categorical
counterpart of the BIT-speciale varieties of \cite{BIT-Speciale}).

Actually, in the varietal case, BIT-speciale axioms are more than
enough in order to guarantee $N(A)=I(A)$: this equality holds also
in the weaker context of BIT varieties of \cite{U_Bit}, also
called \emph{ideal-determined} in \cite{Gumm_Ursini}. In
\cite{JMTU} the authors showed that the categorical counterpart of
BIT varieties can be obtained by removing the so-called Hofmann's
Axiom from the old-style definition of a semi-abelian category
\cite{JMT_SEMIAB}. They called these categories ideal-determined.

In this paper, we first revisit various notions of normalities
from kernels to ideals,  pointing out the relationships exiting
among them in appropriate  categorical contexts. Then we move in
the ideal-determined case, where all those notions collapse, since
 the images of kernels along regular epimorphisms are again
 kernels. This fact allows us to formulate  the categorical notion
 of Higgins' commutator. The last is obtained
 by taking (through the realization map, see \ref{realization}) the regular image  of the
\emph{formal commutator},   internal interpretation
of the \emph{commutator words} of \cite{Higg_commutator}.  By following this
approach, we revisit also Huq's commutator \cite{Huq68}, showing
that in an ideal-determined unital category  \cite{bibbia}, Huq's commutator
$[H,K]_Q$ in $A$ is nothing but the normalization in $A$ of Higgins' commutator $[H,K]_H$.
The two commutators are different in general, even in the category of groups, if $H$ and $K$
are not  normal in $A$, as Example \ref{Example:Higg_not_Huq} shows.

Nevertheless they always coincide when $H \vee K=A$, in particular if one of the two subobjects is the whole $A$.
In this case we can freely refer to \emph{the} commutator $[A,K]$ (notice that when
$K$ is normal, the last coincides also with the normalization of Smith's commutator, as shown in \cite{GVdL}).
The case $H=A$ is special even because in this circumstance the commutator behaves well
w.r.t. normalization, in the sense that $[A,K]=[A,\overline{K}]$, as shown in Proposition \ref{kbar}.

In the category of groups, the commutator $[A,K]$ can be used to test wether the subgroup $K$
of $A$ is normal in $A$. Actually $K$ is normal in $A$ if, and only if, $[A,K]$ is a
subgroup of $K$. This characterization of normal subgroups is interesting even because it
establish a link between normality and commutators.
A natural question is to ask if the internal formulation of this connection is still valid
in our settings.

If a category $\C$ is ideal-determined and unital, Proposition \ref{neces} shows that any normal subobject $K$ of $A$
contains the commutator $[A,K]$.
In order to get the converse, we need to use one more ingredient, namely  Hoffmann's Axiom, which makes
$\C$ into a semi-abelian category. In this context, by means of Proposition \ref{kbar}, we can use
some results by Bourn and Gran on central extensions \cite{BG3,BG2,BG1}  proving this way that
the full characterization of normality via commutators holds an any semi-abelian category.
This result extends what happens for groups to rings, Lie algebras, Leibniz algebras, more
generally any variety of $\omega$-groups, as well as to Heyting algebras to the dual category
of the category of pointed sets.

\section{Normalities}\label{sec:normalities}
We present  some  different  notions of normality,
and try to explain the relationship among them. Although some of those can be
defined with few requirements on the base category $\C$, we will assume throughout the
paper that $\C$ is (at least) a pointed category with finite limits.
Finally, these notions are presented  from the strongest to the weakest.

\subsection*{Kernels}  This is classical. A map $k:K\to A$ is a \emph{kernel} when there exists a
map $f:A\to B$ such that the following is a pullback diagram
$$
\xymatrix{
K\ar[r]\ar[d]_{k}\ar@{}[dr]|(.3){\lrcorner}
&0\ar[d]
\\
A\ar[r]_{f}&B.}
$$
In other words, a kernel is the fiber
over the zero-element of the codomain of a certain morphism.
Notice that $k$ is a monomorphism, so that
kernels are indeed sub-objects.

\subsection*{Normal subobjects}
The categorical  notion of normality has been introduced for a finitely complete category by Bourn in \cite{B_normal}.
Let $(R,r_1,r_2)$ be an equivalence relation on an object $A$. We call a map
$k:K\to A$ \emph{normal} to $R$ when the following two diagrams are pullbacks:
$$
\xymatrix{
K\times K \ar[r]^{\tilde{k}}\ar[d]_{id}\ar@{}[dr]|(.3){\lrcorner}
&R\ar[d]^{\langle r_1,r_2\rangle}\\
K\times K\ar[r]_{k\times k}&A\times A}
\qquad
\xymatrix{
K\times K \ar[r]^{\tilde{k}}\ar[d]_{\pi_1}\ar@{}[dr]|(.3){\lrcorner}
&R\ar[d]^{r_1}\\
K \ar[r]_{k\times k}& A.}
$$
The map $k$ results to be a mono, and diagrams above express nothing but the fact that
the subobject $K$ is an equivalence class of $R$.

Let us notice that, as $\C$ has a zero object, any equivalence relation determines a unique
normal monomorphism: it suffices to take the pullback
\begin{equation}\label{diag:B-normal}
\xymatrix{
K\ar[r]^{}\ar[d]_{k}\ar@{}[dr]|(.3){\lrcorner}
&R\ar[d]^{\langle r_1,r_2\rangle}\\
A\ar[r]_(.4){\langle 1,0\rangle }&A\times A,}
\end{equation}
i.e. in the pointed case, any normal subobject $K$ is precisely the class $[0]_R$ of
an equivalence relation $R$ (see \cite{bibbia}).

The  kernel of a map $f$ is normal to the usual kernel pair equivalence relation $R[f]$,
so that every kernel is normal. The converse is not true in general. It is so when the base category
$\C$ is moreover protomodular, with all equivalence relation effective \cite{bibbia}.

\subsection{Seminormal subobjects}
P. Agliano and A. Ursini (\cite{Agliano_Ursini}) introduced the notion of \emph{clot} for
algebraic varieties.

A subalgebra $K$ of $A$ in $\C$ is a clot in $A$ if
$$
t(a_1,\dots,a_m,0,\dots,0)=0\quad \mathrm{and}\quad k_1,\dots,k_n\in K\quad \mathrm{imply}\quad
t(a_1,\dots,a_m,k_1,\dots,k_n)\in K
$$
for every $a_1,\dots,a_m,k_1,\dots,k_n$ in $A$ and every $(m+n)$-ary term function $t$ of $A$.\\

We will deal with the (weaker) categorical notion of clot in the following section. Here we are
interested in a notion introduced \cite{JU}, which is equivalent to that of clot in the varietal case.
There, as pointed out in \cite{Agliano_Ursini}, a clot is the same as the set (subalgebra) of the elements $x\in A$
such that $0Rx$ for a given  internal reflexive relation (i.e. a semicongruence).
Hence we call a map $k:K\to A$ \emph{seminormal} in $A$ w.r.t.\ $R$ when $k$ is
obtained by a pullback as in diagram (\ref{diag:B-normal}), where $R$ is a semicongruence on $A$
(notice that in \cite{JU} the same is called clot).

Of course every normal subobject is also seminormal, as every equivalence relation is reflective.
The converse may not hold in general; however it does hold when the category $\C$ is Mal'cev.\\

In order to introduce the categorical notion of clot, we recall, for the reader's convenience the notion of \emph{internal action}.

Let $\C$ be a finitely complete pointed category with coproducts. Then for any object $B$ in $\C$
one can define a functor ``ker'' from the category of split epimorphisms (a.k.a. points) over $B$ into $\C$
$$
\mathrm{ker} : Pt_{B}(\C) \to \C,\quad \raisebox{4ex}{\xymatrix{A\ar@<2pt>@{->>}[d]^{\alpha}\\B\ar@<2pt>[u]^{\beta}}}\mapsto \mathrm{ker}(\alpha).
$$
This has a left adjoint:
$$
\mathrm{B+(-)} : \C \to Pt_{B}(\C), \quad X \mapsto \raisebox{4ex}{\xymatrix{B+X\ar@<2pt>@{->>}[d]^{[1,0]}\\B\ar@<2pt>[u]^{i_B}}},
$$
and the monad corresponding to this adjunction is denoted by
$B\flat(-)$. In fact for any object A of $\C$ one gets a kernel
diagram:
$$
\xymatrix{B\flat A\ar[r]^{n_{B,A}}&B+A\ar[r]^(.6){[1,0]}&B.}
$$
The normal monomorphism $n_{B,A}$ will be denoted simply $n$ when no confusion arises.
The $B\flat(-)$-algebras are called internal $B$-actions in $\C$ (see \cite{BJK_INT_OBJ_ACTS}).
Let us observe that in the case of groups, the object $B\flat A$ is the group generated by the \emph{formal conjugates} of elements of $A$ by elements
of $B$, i.e. by the triples of the kind $(b,a,b^{-1})$ with $b\in B$ and $a\in A$.

 For any object $A$ of $\C$, one can define a canonical action of $A$ on $A$ itself given by the composition:
$$
\chi_A:\xymatrix{A\flat A\ar[r]^{n_{A,A}}&A+A\ar[r]^(.6){[1,1]}&A}.
$$
In the category of groups, the morphism $\chi_A$ is the internal
actions associated to the usual conjugation in $A$:
 the realization morphism $[1,1]$ of above makes the
formal conjugates of $A\flat A$ computed effectively in $A$.

\subsection*{Clots}
A subobject $k:K\rightarrowtail A$ is clot in $A$,
when there exists a morphism $\xi:A\flat K\to K$ such that the diagram
$$
\xymatrix{A\flat K \ar[r]^{\xi} \ar[d]_{1\flat k}
&K\ar[d]^k
\\
A\flat A\ar[r]_{\chi_A}&A}
$$
commutes. As $k$ is a mono, the morphism $\xi$ defined above is unique: namely  it is the
internal $A\flat(-)$-action that restricts the  action in $A$.

Every seminormal subobject is closed under conjugation. In fact the restriction map $\xi$
is the internal action on the kernel corresponding to the split extension determined by
the codomain map of $R$.

 In \cite{JMU09} it is shown that, when $\C$
is a pointed regular category with finite coproducts,  also the
converse holds .

\subsection*{Ideals} The categorical notion of ideal has been recently introduced in  \cite{JMU09},
while the varietal one can be found in  \cite{Magari, U_Bit, Higg_commutator}.
A subobject $k:K\rightarrowtail A$ is an ideal in $A$
 when it is the regular image of a
clot along a regular epimorphism, i.e.\ if there exists a commutative diagram
$$
\xymatrix{
L\ar@{.>>}[r]^{f'}\ar[d]_l&K\ar[d]^k\\B\ar@{.>>}[r]_f&A.}
$$
with $K$ clot and $f$, $f'$ regular epimorphisms.
It is immediate to observe that, according to this definition, every clot is an ideal subobject.\\

The notion of ideal rests on the possibility of taking
 images, which would lead
us to set the definition more appropriately in a regular category. Let us observe that if
this is the case, since every clot is the regular image of a kernel, and since
in regular categories regular epimorphisms compose,  ideals are  regular images of
kernels (as it happens in relevant algebraic examples, see \emph{Remark 3.3} in \cite{JMU09}).
This has an interesting consequence when $\C$ is moreover Mal'cev (see \cite{CLP}).
 Bourn has shown \cite{bibbia} that in that case, regular images of kernels are normal.
Result: ideal and  normal subobjects coincide.\\

The discussion above is summarized in the following table:

\begin{center}
\begin{tabular}{|c|c|c|c|c|}
  \hline
  &&&&\\[-1ex]
  \textsc{Kernel} & \textsc{Normal} & \textsc{Seminormal} & \textsc{Clot} & \textsc{Ideal} \\
  &&&&\\[-1ex] \hline
%
%
$${\small
\xymatrix@C=3ex@R=3ex{
K\ar[r]\ar[d]_{k}\ar@{}[dr]|(.3){\lrcorner}
&0\ar[d]
\\
A\ar[r]_{f}&B}}
$$
&
%
%
$${\small
\xymatrix@C=3ex@R=3ex{
K\ar[r]\ar[d]_{k}\ar@{}[dr]|(.3){\lrcorner}
&R\ar[d]
\\
A\ar[d]_{}\ar@{}[dr]|(.3){\lrcorner}\ar[r]^{\langle1,0\rangle}&A\times A \ar[d]^{\pi_2}
\\
0\ar[r]&A}}
$$
&
%
%
$${\small
\xymatrix@C=3ex@R=3ex{
K\ar[r]\ar[d]_{k}\ar@{}[dr]|(.3){\lrcorner}
&R\ar[d]
\\
A\ar[d]_{}\ar@{}[dr]|(.3){\lrcorner}\ar[r]^{\langle1,0\rangle}&A\times A \ar[d]^{\pi_2}
\\
0\ar[r]&A}}
$$
&
%
%
$${\small
\xymatrix@C=3ex@R=3ex{
A\flat K\ar[r]^\xi \ar[d]_{1\flat k}
&K\ar[d]^k
\\
A\flat A\ar[r]_{\chi_A}&A}}
$$
&
%
%
$${\small
\xymatrix@C=3ex@R=3ex{
L\ar@{->>}[r]^{f'} \ar[d]_{l}
&K\ar[d]^k
\\
B\ar@{->>}[r]_{f}&A}}
$$
\\
&
$K=[0]_R$,
&
$K=[0]_R$,
&
$\chi_A$ factors
&
$l$ clot,
\\
&
$R$ equiv. rel.
&
$R$ refl. rel.
&
through $k$.
&
$f,f'$ regular epis.
\\&&&&\\&&&&\\&&&&\\&&&&\\&&&&\\ \hline
\end{tabular}\\
\raisebox{0pt}[0pt][0pt]{
\raisebox{+15ex}{$\xymatrix@C=16ex@R=2.5ex{\\
\ar@{}[r]|(.15){}="a1"|(.6){}="a2"\ar@{=>}"a1";"a2"&
\ar@{}[r]|(.15){}="b1"|(.6){}="b2"\ar@{=>}"b1";"b2"&
\ar@{}[r]|(.1){}="c1"|(.6){}="c2"\ar@{=>}"c1";"c2"&
\ar@{}[r]|(.0){}="d1"|(.5){}="d2"\ar@{=>}"d1";"d2"&
\\
&\ar@{}[r]|(.15){}="bb1"|(.6){}="bb2"\ar@{=>}"bb2";"bb1"^{\txt{Mal'cev}}&
\ar@{}[r]|(.1){}="cc1"|(.6){}="cc2"\ar@{=>}"cc2";"cc1"^{\txt{Regular}}&&
\\
&\ar@{}[r]|(.15){}="bbb1"&&
\ar@{}[r]|(.6){}="ddd2"\ar@{=>}"ddd2";"bbb1"^{\txt{Mal'cev + Regular}}&}$
}}
\end{center}

\section{Semi-abelian, homological and ideal-determined categories.}
Semi-abelian categories have been introduced in \cite{JMT_SEMIAB} in order to recapture
algebraic properties of groups, rings etc.\ in a categorical-theoretical
setting, just as abelian categories do for abelian groups, modules etc.

A semi-abelian category is a Barr-exact, Bourn-protomodular category with zero object and
finite coproducts. In \cite{JMT_SEMIAB} the authors compare the definition as stated
above (i.e.\ given in terms of ``new" axioms) with preexisting investigations on this
subject. This lead them to state  equivalent versions of the main definition, given
in terms of some so--called ``old" axioms, more commonly used in universal algebra.
From there we borrow the following characterization: a pointed category $\C$
with finite limits and colimits is semi-abelian if,
and only if, it satisfies A$1$, A$2$ and A$3$ below:
\begin{itemize}
\item[A$1$] $\C$ has  a pullback-stable normal-epi/mono factorization
\item[A$2$] regular images of kernels are kernels
\item[A$3$] (Hoffmann's axiom) in the diagram below, where $l$ and $l'$ are regular epimorphisms
and $x'$ is a kernel, if Ker$(l)\leq X$ then $x$ is also a kernel.
$$
\xymatrix{
X\ar@{->>}[r]^{l'}\ar[d]_{x}&X'\ar[d]^{x'}\\
Y\ar@{->>}[r]_{l}&Y'}
$$
\end{itemize}
Axiom A1 can be easily reformulated as follows:
\begin{itemize}
\item[A$1'$] $\C$ is regular and regular epi's = normal epi's.
\end{itemize}
When axiom A$1$ holds, axiom A$2$ means precisely that kernels = ideals, and
 in this case all the different notions detailed in Section \ref{sec:normalities} collapse.
Pointed categories  with finite limits and colimits satisfying
axioms A$1$ and A$2$ are called \emph{ideal-determined}   \cite{JMU09}.

Differently, as shown in Proposition 3.3.\ of \cite{JMT_SEMIAB}, axioms
A$1$ and A$3$ characterize homological categories among those with
finite limits and zero object,  where homological means pointed,
regular and protomodular, according to the definition due to
Borceux and Bourn  \cite{bibbia}.

The formulation of the notion of semi-abelian category in terms of
the (old) axioms A$1$, A$2$ and A$3$ led us to the following
analysis (in the perspective of the observation in 2.7 of \cite{JMT_SEMIAB}).

Let us consider the diagram:
$$
\xymatrix@C=10ex{
*+[F:<3pt>]\txt{normal epimorphisms\\ i.e. cokernels}
\ar@{<.>}[r]^{(i)}
\ar@{<->}[d]_{1:1}
&
*+[F:<3pt>]\txt{regular epimorphisms}
\ar@{<->}[d]^{1:1}
\\
*+[F:<3pt>]\txt{kernels}
\ar@{<.>}[r]^{(ii)}
\ar@{}[d]|(.25){}="b1"|(.8){}="b2"
\ar@{^(->}"b1";"b2"_{(iv)}
&
*+[F:<3pt>]\txt{effective equiv. rel.\\ i.e. kernel pairs}
\ar@{}[d]|(.4){}="a1"|(.8){}="a2"
\ar@{^(->}"a1";"a2"^{(v)}
\\
*+[F:<3pt>]\txt{normal subobjects}
\ar@{<.>}[r]_{(iii)}
&
*+[F:<3pt>]\txt{equiv. relations}
}$$
If they are considered separately, the two columns describe quite general facts which hold in categories
where referred items do exists: in a category with pullbacks and coequalizers, regular epimorphisms
are in one-to-one correspondence with kernel pairs, and any kernel pair is an equivalence relation.
Similarly, in a category with kernels and cokernels one gets a one-to-one correspondence between them, and any kernel is
a normal subobject.

Now, axiom A$1$, in the form of A$1'$, amounts to $\C$ being a
regular category, plus the arrow $(i)$ being an equality. This
implies immediately that the unique map $(ii)$, that makes the
upper square commute, is a bijection, and hence the following
valuable fact holds: any effective equivalence relation is
completely determined by its zero-class.

When also axiom A$3$ holds, i.e.\ when the category $\C$ is
homological, this fact can be extended to equivalence relations,
that are univocally determined by their zero classes \cite{bibbia}.
 In this case, the bijection $(ii)$ is just the restriction
of a more general bijection $(iii)$.

Finally let us consider the new axiom:
\begin{itemize}
\item[A$2'$] Kernels = normal subobjects.
\end{itemize}
This means precisely that the inclusion $(iv)$ is indeed an equality. Hence, when the category $\C$ is homological,
this fact collapses the lower square of the diagram, so that  axiom A$2'$ is equivalent to the inclusion $(v)$
being an equality. This means that  the regular category $\C$ is moreover exact. \\

Considerations above give the following characterization:

\begin{Corollary}
A category $\C$ is semi-abelian if, and only if, axioms A$1'$, A$2'$ and A$3$ hold.
\end{Corollary}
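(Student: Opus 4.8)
The plan is to establish the biconditional by showing that the classical definition of semi-abelian (Barr-exact, Bourn-protomodular, pointed with finite coproducts) is equivalent to the conjunction of A$1'$, A$2'$ and A$3$. Since the excerpt has already recorded that the ``old'' axioms A$1$, A$2$, A$3$ characterize semi-abelian categories, it suffices to prove that, in the presence of A$1'$ and A$3$, the axiom A$2$ (regular images of kernels are kernels) is equivalent to A$2'$ (kernels $=$ normal subobjects). In other words, I would reduce the whole statement to the single equivalence A$2 \Leftrightarrow$ A$2'$, carried out against the homological background furnished by A$1'$ and A$3$.

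First I would unwind the meaning of each axiom in terms of the large diagram preceding the statement. Axiom A$1'$ says $\C$ is regular with normal epis $=$ regular epis, which the discussion above identifies with arrow $(i)$ being an equality and which forces $(ii)$ to be a bijection. Adding A$3$ makes $\C$ homological, and here the key inputs, both recalled in the excerpt, are that the bijection $(ii)$ between kernels and kernel pairs extends to a bijection $(iii)$ between normal subobjects and \emph{all} equivalence relations. So in a homological category every normal subobject is the zero-class of a unique equivalence relation, and conversely. This is exactly the apparatus I need to translate statements about subobjects into statements about equivalence relations and back.

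Next I would run the equivalence itself. For the direction A$2' \Rightarrow$ A$2$: assuming kernels $=$ normal subobjects, I use that $(iv)$ is an equality, so in the diagram of the discussion the lower square collapses and A$2'$ becomes equivalent to $(v)$ being an equality, i.e.\ every equivalence relation is effective — which is precisely Barr-exactness. Combined with A$1'$ and A$3$ this recovers the full ``old'' axiom set A$1$, A$2$, A$3$ (A$2$ holds because exactness upgrades the homological bijection $(iii)$ so that regular images of kernels remain kernels). For the converse A$2 \Rightarrow$ A$2'$: under A$1$, as the text already notes, A$2$ says kernels $=$ ideals, which forces all the intermediate normality notions of Section~\ref{sec:normalities} to collapse; in particular kernels $=$ normal subobjects, giving A$2'$. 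Then I would simply cite the previously established characterization (old axioms $\Leftrightarrow$ semi-abelian) to conclude.

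The main obstacle I expect is the careful bookkeeping of which bijections and collapses are \emph{logically} needed versus merely suggested by the picture, since the proof is essentially a diagram-chase through the correspondences $(i)$--$(v)$. Concretely, the delicate point is verifying that, once $(iv)$ is an equality in a homological category, exactness (equality of $(v)$) is genuinely equivalent to A$2'$ and not just implied by it; this is where protomodularity via A$3$ is doing real work, because it is what guarantees that an equivalence relation is recoverable from its zero-class and hence that effectiveness can be tested at the level of subobjects. Once that equivalence is pinned down, the remainder is a formal application of the already-cited ``old axioms'' characterization, so the entire argument is short.
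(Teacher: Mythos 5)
Your proposal is correct and follows essentially the paper's own argument: the central step --- that under the homological background provided by A$1'$ and A$3$, the equality of kernels and normal subobjects (arrow $(iv)$) is equivalent to every equivalence relation being effective (arrow $(v)$), i.e.\ to Barr-exactness, because protomodularity makes equivalence relations recoverable from their zero-classes --- is precisely the diagram analysis the paper uses. Your repackaging as ``prove A$2\Leftrightarrow$A$2'$ given A$1'$ and A$3$, then cite the old-axiom characterization,'' instead of concluding semi-abelianness directly from \emph{homological $+$ exact}, is only a cosmetic reorganization of the same ingredients, since the direction A$2\Rightarrow$A$2'$ via the collapse of the chain kernel--normal--seminormal--clot--ideal is likewise already recorded in the text.
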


\section{Commutators}
Several notions of commutators of two coterminal (i.e.\ with the same codomain) morphisms
have been proposed and studied in different algebraic contexts.

In this section we explore some aspects of commutator theory, by comparing categorically
 the two different notions of commutator
introduced by Higgins \cite{Higg_commutator} and Huq \cite{Huq68}. Our approach rests on the observation that
both these notions can be formulated  from the same notion of \emph{formal commutator}, but first we need
to recall the original definitions.\\

\subsection{Higgins' commutator} The notion of commutator according to Higgins,
generalizes  the case of groups  to those of
$\Omega$-groups \cite{Higg_commutator}.

We recall the definition of $\Omega$-groups for the reader's convenience.
A category $\mathcal{V}$ of $\Omega$-groups  is a variety of
groups (in the sense of the universal algebra) such that:
\begin{itemize}
\item[-] the group identity is the only operation of arity $0$,
i.e. the variety is pointed; \item[-] all other operations
different from group operation (here written additively), inverse
and identity, have arity $n,$ with  $n\geq 1$.
\end{itemize}

Given a $\Omega$-group
$A$ and two subobjects $h\!:H\rightarrowtail A$ and $k\!:K\rightarrowtail A$,
the Higgins' commutator $[H,K]_H$ is set of all
$f(\overrightarrow{h},\overrightarrow{k})$ with $\overrightarrow{h}\in H^n$
and $\overrightarrow{k}\in K^m$, $f$ being commutator words, i.e. such that
$f(\overrightarrow{0},\overrightarrow{y})=0=f(\overrightarrow{x},\overrightarrow{0})$.\\

Actually it is possible to describe Higgins' commutator as the
 image of a certain subobject of the coproduct $H+K$
through  the canonical map $[h,k]:H+K\to A$. This can be given in
an intrinsic way, which will make a generalization
easier.

Let us fix some notation.
Let $H$ and $K$ be two given objects of a pointed finitely
complete category $\C$ with coproducts.
We denote by $\Sigma_{H,K}$ (or simply $\Sigma$) the canonical arrow
$$
\Sigma_{H,K}=\langle[1,0],[01]\rangle = [\langle1,0\rangle,\langle0,1\rangle]: H+K\to H\times K.
$$
Let us observe that $\Sigma_{H,K}$ is a regular epimorphism for every pair of objects $H$ and $K$
in $\C$ if, and only if, $\C$ is unital \cite{bibbia}.
\subsection{Formal commutator}
Let us consider two objects $H$ and $K$ of $\C$.
We define the
\emph{formal commutator} of $H$ and $K$ as the kernel $(H\diamond
K,\sigma_{H,K})$ of $\Sigma_{H,K}$.
 Actually $H\diamond K$ is the
intersection of
 $H\flat K$ and $K\flat K$, as shown in the diagram below:
$$
\xymatrix{
H\diamond K \ar[r]^{}\ar[d]_{}\ar@{.>}[dr]|{\sigma_{H,K}}
&H\flat K\ar[d]
&
\\
K\flat H\ar[r]
&H+K\ar[r]^{[0,1]}\ar[d]_{[1,0]}\ar@{.>}[dr]|{\Sigma_{H,K}}&
K\ar@{<-}[d]\\
&H\ar@{<-}[r]
&H\times K.}
$$
The reason why we call $H\diamond K$ \emph{formal commutator} is,
from one side, that it is just the  Huq's commutator (that we are
going to define later) of $H$ and $K$ in  $H+K$ (see Remark
\ref{rem:formal_comm}).

Our motivation for using the therm \emph{formal} originates by the fact that,
 in the category of groups, the elements of $H+K$ can be
represented as  reduced formal juxtapositions of elements of $H$
and $K$, say sequences of the kind $(h_1,k_1,\cdots,h_n,k_n)$.
 $H\diamond K$ is generated by all the words of the kind $(x,y,x^{-1},y^{-1})$,
with $x\in H$ and $y\in K$.

\subsection{The realization map}\label{realization}
The canonical map
$$
[h,k]:H+K\to A
$$
has an interesting interpretation when the object $A$ is a group
(actually, similar arguments hold in any  pointed variety of
universal algebra). The map $[h,k]$ acts on sequences of the kind
$(h_1,k_1,\cdots,h_n,k_n)$ by means of the group operation
of $A$, thus giving the element   $h_1 k_1 \cdots h_n k_n$ \emph{computed} in $A$.\\

Now, if $(H,h)$ and $(K,k)$ are  subgroups of a given group $A$,
one  can easily check that the image of $H\diamond  K$ through $[h,k]$ is
precisely the commutator subgroup of $H$ and $K$ in $A$.\\

At this point one can  easily define the internal version of
Higgins' commutator in an ideal-determined category.
\begin{Definition}Let $\C$ be ideal-determined. Then the Higgins' commutator $[H,K]_H$
of two subobjects $h:H\rightarrowtail A$ and $k:K\rightarrowtail A$ of $A$
is the regular image of $H\diamond K$ under the morphism $[h,k]\sigma_{H,K}$  (see diagram below)
$$\newdir{|>}{%
!/4.5pt/@{|}*:(1,-.2)@^{>}*:(1,+.2)@_{>}}
\xymatrix{*+<2ex>{H\diamond K}\ar@{->>}[r]\ar@{|>->}[d]_{\delta_{H,K} }
&*+<2ex>{[H,K]_H} \ar@{>->}[d]
\\
H+K\ar[r]_{[h,k]}&A.}
$$
\end{Definition}

As far as the ground category $\C$ has a regular epi/mono
factorization system, as in the ideal-determined case, we can
obtain the join of two subobjects $(H,h)$ and $(K,k)$ of a given
object $A$ as the regular image of the canonical map $[h,k]$:
$$
\xymatrix{
&H\vee K\ar[dr]^q\\
H+K\ar@{->>}[ur]^p\ar[rr]_{[h,k]}&&A.}
$$
Then, the following proposition holds.
\begin{Proposition}Let $\C$ be ideal-determined. Then the Higgins' commutator $[H,K]_H$
of two subobjects $h:H\rightarrowtail A$ and $k:K\rightarrowtail A$ of $A$
is a normal subobject of $H\vee K$.
\end{Proposition}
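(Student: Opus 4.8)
The plan is to recognise $[H,K]_H$ as the regular image of the \emph{kernel} $\sigma_{H,K}\colon H\diamond K\rightarrowtail H+K$ along the regular epimorphism $p\colon H+K\twoheadrightarrow H\vee K$, and then to invoke axiom A$2$ (regular images of kernels are kernels), which is available since $\C$ is ideal-determined, to conclude that this image is a kernel, hence a normal subobject, of $H\vee K$.

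First I would check that $[H,K]_H$ does indeed sit inside $H\vee K$, and determine it as a subobject there. By definition $[H,K]_H$ is the regular image of $H\diamond K$ under $[h,k]\sigma_{H,K}$, and by construction of the join we have $[h,k]=q\circ p$ with $p$ a regular epimorphism and $q\colon H\vee K\rightarrowtail A$ a monomorphism. Factor $p\circ\sigma_{H,K}\colon H\diamond K\to H\vee K$ through its regular image as $H\diamond K\twoheadrightarrow I\rightarrowtail H\vee K$. Since $q$ is a monomorphism, postcomposing with it does not alter the epi/mono factorization: the image of $[h,k]\sigma_{H,K}=q\,p\,\sigma_{H,K}$ is $I$ equipped with the monomorphism $I\rightarrowtail H\vee K\rightarrowtail A$. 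In particular $[H,K]_H$ factors through $q$, so it is a subobject of $H\vee K$, and as such it is precisely $I$, the regular image of $H\diamond K$ along $p$.

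It then remains to apply A$2$. The formal commutator $\sigma_{H,K}\colon H\diamond K\rightarrowtail H+K$ is by definition the kernel of $\Sigma_{H,K}$, hence a kernel, and so a normal subobject, of $H+K$. Because $\C$ is ideal-determined, axiom A$2$ guarantees that the regular image of this kernel along the regular epimorphism $p$ is again a kernel in $H\vee K$; that is, $I=[H,K]_H$ is a kernel, and therefore a normal subobject, of $H\vee K$. The only point requiring care, and the main (mild) obstacle, is the bookkeeping of the previous paragraph: one must be sure that taking the regular image after composing with the monomorphism $q$ recovers exactly the image formed inside $H\vee K$, i.e.\ that the stable factorization system of the regular category $\C$ allows monomorphisms to be absorbed into the mono part of a factorization. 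This is a standard property of regular categories, and once it is in place the normality of $[H,K]_H$ in $H\vee K$ follows at once from A$2$.
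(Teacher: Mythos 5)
Your proof is correct and takes essentially the same route as the paper: both identify $[H,K]_H$ with the regular image of the kernel $\sigma_{H,K}\colon H\diamond K\rightarrowtail H+K$ along the regular epimorphism $H+K\twoheadrightarrow H\vee K$, and then invoke axiom A$2$ to conclude that this image is a kernel, hence normal, in $H\vee K$. The image-factorization bookkeeping you spell out (that postcomposing with the monomorphism $H\vee K\rightarrowtail A$ does not change the regular image) is exactly what the paper's diagram leaves implicit.
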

\begin{proof}
The inclusion of $[H,K]_H$ in $A$ factors through $H\vee K$ by a
monomorphism, which is normal as a consequence of axiom A$2$; see
diagram below:
$$\newdir{|>}{%
!/4.5pt/@{|}*:(1,-.2)@^{>}*:(1,+.2)@_{>}}
\xymatrix{*+<2ex>{H\diamond K}\ar@{->>}[r]\ar@{|>->}[d]_{\delta_{H,K} }
&*+<2ex>{[H,K]_H} \ar@{|>.>}[d] \ar@{>->}[dr]
\\
H+K\ar@{->>}[r]\ar@/_4ex/[rr]_{[h,k]}&H\vee K
\ar@{>->}[r]&A.}
$$
\end{proof}

\subsection{Huq's commutator}
The construction described below  was introduced by S. A. Huq \cite{Huq68} in a purely categorical setting,
and further developed by D. Bourn \cite{Bourn_commutator}. We borrow the general definition from the second author,
but we will restrict our attention to the cases when the two coterminal morphisms are monomorphisms.

\begin{Definition}\label{def:commutator_Bourn}
Let $\C$ be a finitely complete, unital category,  such that regular and normal epimorphisms
coincide, and let us consider two subobjects $h\!:H\rightarrowtail A$ and $k\!:K\rightarrowtail A$.
We define the commutator quotient $Q=Q(h,k)$ as the colimit of the solid arrows in the diagram below,
if it exists:
\begin{equation}\label{diag:diamond}
\newdir{|>}{%
!/4.5pt/@{|}*:(1,-.2)@^{>}*:(1,+.2)@_{>}}
\xymatrix@!=7ex{
&H\ar[dl]_{\langle1,0\rangle}\ar@{.>}[d]\ar[dr]^{h}\\
H\times K\ar@{.>}[r]^m
&Q&A\ar@{.>>}[l]_{q}&*+<2ex>{[H,K]_Q}\ar@{|>.>}[l]_(.6)j\\
&K\ar[ul]^{\langle0,1\rangle}
\ar@{.>}[u]\ar[ur]_{k}
}
\end{equation}
The kernel of $q$ is denoted by $[H,K]_Q$ and we will refer to it as to the Huq's commutator of $H$ and $K$.
\end{Definition}
Let us observe that when the map $q$ is an isomorphism, this means
precisely that  $h$ and $k$ cooperate, i.e.\ $H$ and $K$ commute in
$A$. Hence the distance for $q$ from being an isomorphism
expresses exactly the lack of commutativity between $H$ and $K$.
Since $q$ is a normal  epimorphism,  this can be measured by its
kernel $[H,K]_Q$.

\begin{remark}
All that follows is indeed well known, but it is worth recalling it, as
it describes the commutator (quotient) as a kind of universal representer
of the algebraic operations.

Let $\C$ be the category of groups.
The group operation $\cdot\!:\xymatrix{H\times K\ar@{-->}[r]&A}$ is not a morphism in general.
Nevertheless one can quotient the group $A$ with some normal subgroup, such that the quotient map is indeed a morphism.
This is always possible: a trivial answer is to quotient $A$ with itself. A better answer is given by the commutator of
$H$ and $K$. This is in fact the smaller normal subgroup such that such that the quotient map is a morphism: with
$Q=A/[H,K]_Q$ one has:
$$
\xymatrix@C=6ex{
H\times K\ar@/_3ex/[rr]_m\ar@{-->}[r]^(.6){\cdot}&A\ar[r]^(.4){q}&A/[H,K]_Q.
}
$$
Finally, if $H=K=A$ we get the  Heckmann-Hilton argument: a group $A$ is abelian
if, and only if, the group operation $A\times A\to A$ is a morphism, and this happens
precisely when the derived subgroup $[A,A]_Q$ is trivial.
\end{remark}

We observe  that  the definition of the commutator quotient  can
be expressed by means of a canonical pushout, as one can easily
see in the next  proposition:
\begin{Proposition} Let $\C$ be as before, with finite sums.
The colimit diagram (\ref{diag:diamond}) is equivalent to the following pushout
\begin{equation}\label{diag:pushout}
\xymatrix{
H+K\ar[r]^{[h,k]}
\ar[d]_{\Sigma}
&A\ar[d]^q\\
H\times K\ar[r]_m
&Q\ar@{}[ul]|(.2){\ulcorner}
}.
\end{equation}
\end{Proposition}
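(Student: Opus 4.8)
The plan is to prove the two constructions agree by comparing their universal properties, that is, by exhibiting a bijection between cocones that is natural in the test object. A cocone over the solid diagram (\ref{diag:diamond}) with vertex $X$ consists of a morphism $m\colon H\times K\to X$, a morphism $q\colon A\to X$, and the two legs out of $H$ and $K$; but these last two legs are forced to be the composites $m\langle1,0\rangle=qh$ and $m\langle0,1\rangle=qk$, so the data reduce to a pair $(m,q)$ subject to the two commutativity conditions $m\langle1,0\rangle=qh$ and $m\langle0,1\rangle=qk$.

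First I would record the defining identities of the two canonical maps against the coproduct injections $i_H\colon H\to H+K$ and $i_K\colon K\to H+K$. By definition of $\Sigma=[\langle1,0\rangle,\langle0,1\rangle]$ one has $\Sigma\, i_H=\langle1,0\rangle$ and $\Sigma\, i_K=\langle0,1\rangle$, while by definition of the realization map one has $[h,k]\, i_H=h$ and $[h,k]\, i_K=k$. These four equalities are the only external inputs the argument needs.

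Then I would compare with the pushout span (\ref{diag:pushout}). A cocone over it with vertex $X$ is a pair $(m,q)$ satisfying the single equation $m\,\Sigma=q\,[h,k]$ of maps $H+K\to X$. By the universal property of the coproduct $H+K$, such an equation of morphisms out of $H+K$ holds if, and only if, it holds after precomposition with $i_H$ and with $i_K$ separately. Using the identities of the previous step, precomposition with $i_H$ yields exactly $m\langle1,0\rangle=qh$, and precomposition with $i_K$ yields exactly $m\langle0,1\rangle=qk$. Hence the cocones over the pushout span are precisely the cocones over (\ref{diag:diamond}), under a correspondence that is manifestly natural in $X$. Since a colimit is a representing object for its functor of cocones, one of the two colimits exists precisely when the other does, and they agree up to canonical isomorphism, which is the assertion.

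I expect no genuine obstacle: the argument is a formal manipulation of universal properties. The only point requiring care is the bookkeeping of the two redundant legs out of $H$ and $K$ in (\ref{diag:diamond}) — checking that they are determined by $(m,q)$ and therefore impose no condition beyond the two displayed equations, so that the diagram is honestly controlled by the pair $(m,q)$. Once the injection identities $\Sigma\, i_H=\langle1,0\rangle$ and $[h,k]\, i_H=h$ (together with their $K$-counterparts) are in hand, the equivalence of the two cocone descriptions, and hence of the colimits, is immediate.
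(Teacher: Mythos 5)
Your proof is correct, and it is precisely the routine verification the paper has in mind: the authors state this Proposition without proof (``as one can easily see''), the intended argument being exactly your observation that a cocone under the solid part of diagram (\ref{diag:diamond}) amounts to a pair $(m,q)$ with $m\langle1,0\rangle=qh$ and $m\langle0,1\rangle=qk$, which by the universal property of the coproduct is the single equation $m\Sigma=q[h,k]$ defining a cocone under the pushout span. Your bookkeeping of the redundant legs out of $H$ and $K$, and the naturality in the test object, are handled correctly, so nothing is missing.
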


In this context, the Huq's commutator $[H,K]_Q$ of $H$ and $K$ is
obtained as the kernel of the map $q$ of diagram
(\ref{diag:pushout}).
\begin{remark}\label{rem:formal_comm}
It is worth considering the very special case when $A=H+K$, and
$h$ and $k$ are the canonical inclusions. In this situation
diagram (\ref{diag:pushout}) trivializes, and the Huq's commutator
$[H,K]_Q$ of $H$ and $K$ in $H+K$ coincides with the formal
commutator $H\diamond K$, whence its name.
\end{remark}

\subsection{Higgins commutator and Huq commutator compared}

The discussion above suggests to develop some considerations. Let
$\C$ be an    ideal-determined, unital category and let $H,K$ be
subobjects of an object $A$ as above. This is a nice context where
Huq's commutator and Higgins' commutator can be compared.

In general these two notions do not coincide, more precisely
 $[H,K]_H$ is a proper  subobject of $[H,K]_Q$, since $q$
 restricted to  $[H,K]_H$ is zero, as the following diagram shows:
$$\newdir{|>}{%
!/4.5pt/@{|}*:(1,-.2)@^{>}*:(1,+.2)@_{>}}
\newdir{ >}{{}*!/-5pt/@{>}}
\xymatrix{
&
[H,K]_H \ar@{|>->}[dd]\ar@{ >->}[dr]
\\
*+<2ex>{H\diamond K} \ar@{->}[ur]\ar@{|>->}[dd]_{\sigma}\ar[rr]
&&[H,K]_Q\ar@{|>->}[dd]
\\
&*+<2ex>{H\vee K} \ar@{->>}[dd] \ar@{ >->}[dr]\ar@{}[dddr]|(.9){\ulcorner}
\\
H+K\ar@{->>}[dd]_{\Sigma}\ar@{->>}[ur]\ar[rr]_(.65){[h,k]}\ar@{}[dr]|(.85){\ulcorner}
&&A\ar@{->>}[dd]^q
\\
&\bullet\ar[dr]
\\
H\times K\ar@{->>}[ur] \ar[rr]
&&\bullet
}
$$

Let us observe that in the above diagram, the three vertical
sequence of morphisms are exact and the two bottom \lq\lq
diamonds" are pushouts.\medskip

To be more precise, the relationship between Huq's commutator and
Higgins' commutator is explained by the following proposition.
\begin{Proposition}
Let $\C$ be an ideal-determined, unital category, and let $H$ and
$K$ be subobjects of  $A$. Then
$$
[H,K]_Q=\overline{[H,K]}_H,
$$
i.e. the Huq's commutator is the normalization in $A$ of the Higgins' commutator.\end{Proposition}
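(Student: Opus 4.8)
The plan is to prove the two inclusions $\overline{[H,K]}_H\leq[H,K]_Q$ and $[H,K]_Q\leq\overline{[H,K]}_H$ separately, exploiting the pushout description (\ref{diag:pushout}) of the Huq commutator together with the fact that, in this setting, $\Sigma$ is a regular epimorphism.

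First I would establish that the Higgins commutator sits inside the Huq commutator. Since $H\diamond K=\mathrm{ker}(\Sigma)$, the composite $\Sigma\,\sigma_{H,K}$ vanishes; combined with the commutativity $q\,[h,k]=m\,\Sigma$ of the pushout square this gives $q\,[h,k]\,\sigma_{H,K}=m\,\Sigma\,\sigma_{H,K}=0$. Hence $[h,k]\,\sigma_{H,K}$ factors through $\mathrm{ker}(q)=[H,K]_Q$, and therefore so does its regular image, which is exactly $[H,K]_H$. Thus $[H,K]_H\leq[H,K]_Q$. As $[H,K]_Q$ is a kernel, hence a normal subobject of $A$, and normalization is the least normal subobject containing a given one, I obtain $\overline{[H,K]}_H\leq[H,K]_Q$.

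For the reverse inclusion I would show that the quotient $p\colon A\to A/\overline{[H,K]}_H$ factors through $q$. Consider $p\,[h,k]\colon H+K\to A/\overline{[H,K]}_H$. Its restriction to the formal commutator is $p\,[h,k]\,\sigma_{H,K}$, and since the image of $[h,k]\,\sigma_{H,K}$ is $[H,K]_H\leq\overline{[H,K]}_H=\mathrm{ker}(p)$, this restriction is zero. The crucial point is that, because $\C$ is unital, $\Sigma$ is a regular epimorphism, and because regular and normal epimorphisms coincide (axiom A$1'$), $\Sigma$ is the cokernel of its kernel $\sigma_{H,K}$. Therefore $p\,[h,k]$, vanishing on $H\diamond K$, factors uniquely as $b\,\Sigma$ for some $b\colon H\times K\to A/\overline{[H,K]}_H$. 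The pair $(p,b)$ then forms a cocone on the span defining the pushout (\ref{diag:pushout}), so it induces $\varphi\colon Q\to A/\overline{[H,K]}_H$ with $\varphi\,q=p$. Consequently $\mathrm{ker}(q)\leq\mathrm{ker}(p)$, that is $[H,K]_Q\leq\overline{[H,K]}_H$. Together with the first inclusion this yields the equality.

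I expect the main obstacle to be the factorization of $p\,[h,k]$ through $\Sigma$ in the reverse inclusion: this is precisely where unitality and the identification of regular with normal epimorphisms are indispensable, since they are what guarantee that $\Sigma=\mathrm{coker}(\sigma_{H,K})$, and hence that vanishing on $H\diamond K$ already forces a factorization through $H\times K$. The first inclusion, by contrast, is essentially formal, using only the commutativity of the pushout square and the description of $[H,K]_H$ as a regular image.
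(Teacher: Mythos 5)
Your proof is correct and takes essentially the same route as the paper's: the paper observes that, by unitality and axiom A$1$, both $\Sigma$ and $q$ are cokernels, and that ``a direct calculation'' identifies $q$ as the cokernel of $[H,K]_H\rightarrowtail A$, whence $[H,K]_Q=\ker(q)=\ker(\mathrm{coker}([H,K]_H))=\overline{[H,K]}_H$. Your two-inclusion argument simply writes out that direct calculation --- the factorization of $p[h,k]$ through $\Sigma=\mathrm{coker}(\sigma_{H,K})$ and the universal property of the pushout square --- supplying the details the paper leaves to the reader.
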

\begin{proof}
Since $\C$ is unital, $\Sigma$ is a regular epi, and so is  $q$.
Actually both are cokernels by axiom A$1$, and
 a direct calculation shows that $q$ is precisely the cokernel of the inclusion $[H,K]_H\rightarrowtail A$.
\end{proof}

A natural question  to ask at this point is under what conditions on $\C$ the comparison
is an isomorphism, so that the two notions coincide. As we already
noticed, this is not true in general, and even for the category of
groups the two notions are distinct, if $H$ and $K$ are not
normal, as the following example shows.

\begin{Example}\label{Example:Higg_not_Huq}
Let us consider the simple group $A_5$, given by even permutations of order five, and the two subgroups
$H=\langle (12)(34) \rangle$ and $K=\langle (12)(45)\rangle$. Then $[H,K]_H=\langle (345)\rangle\neq[H,K]_Q=A_5$
(see \cite{Alan} for a detailed discussion).
\end{Example}

Of course, Huq's commutator and Higgins' commutator coincide
 when the subobjects are sufficiently big, i.e.\ when $H\vee K =A$
 (as, for example, in the case of the formal commutator, where $H\vee
 K=H+K$).
In particular, this happens when one of the subobjects, say $H$,
is the whole $A$; then the map $[1,k]:A+K\to A$ is a regular epimorphism,
and the diagram above happily collapses. If this is the case, we
will drop the $H$ and the $Q$ subscripts, and write simply $[A,K]$
for \emph{the} commutator of $A$ and $K$.
Another reason why the case $H=A$ is special is that in this case
Huq's commutator behaves well w.r.t. normalization, as shown in the Proposition \ref{kbar} below.
In general this is not true, as one can verify for the simple group $A_6$,
with $H=\langle (123) \rangle$ and $K=\langle (456) \rangle$, where $[H,K]_Q=0$, while
$[\overline{H},\K]_Q=A_6$ (see \cite{Alan}).

\begin{Proposition}\label{kbar}
Let $\C$ be an ideal-determined unital category. Then for a
monomorphism $k:K\rightarrowtail A$ one has
$$
[A,K]=[A,\K],
$$
where $(\overline{K},\bar{k})$ is the normalization of $(K,k)$.
\end{Proposition}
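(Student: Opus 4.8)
The plan is to reduce everything to the commutator quotient. Since $H=A$ we have $A\vee K=A$ (and likewise $A\vee\overline{K}=A$), so by the discussion preceding this proposition the Higgins' and Huq's commutators agree: $[A,K]$ is a \emph{normal} subobject of $A$, and the proposition identifying $[H,K]_Q$ with the normalization of $[H,K]_H$ shows that the map $q\colon A\twoheadrightarrow Q$ of the pushout (\ref{diag:pushout}) (taken with $H=A$, $h=1_A$) is precisely the cokernel of $[A,K]\rightarrowtail A$, so that $[A,K]=\ker q$. The inclusion $[A,K]\leq[A,\overline{K}]$ is immediate from $K\leq\overline{K}$ and the monotonicity of the Higgins' commutator: the inclusion $\iota\colon K\rightarrowtail\overline{K}$ induces $A\diamond K\to A\diamond\overline{K}$ over $[1,k]=[1,\bar k](1+\iota)$. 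The whole content is therefore the reverse inclusion $[A,\overline{K}]\leq[A,K]$.

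To obtain it I would show that $q$ makes $A$ and $\overline{K}$ cooperate. This suffices: if $q$ and $q\bar k$ cooperate then $q\,[1,\bar k]$ factors through $\Sigma_{A,\overline{K}}$, hence $q$ annihilates the formal commutator $A\diamond\overline{K}=\ker\Sigma_{A,\overline{K}}$ and so its regular image $[A,\overline{K}]$, giving $[A,\overline{K}]\leq\ker q=[A,K]$. Now the pushout square (\ref{diag:pushout}) exhibits $m\colon A\times K\to Q$ as a cooperator for $q$ and $qk$, and $m$ is a regular epimorphism (from $q=m\langle1,0\rangle$ with $q$ a regular epi). Writing $K_Q=\mathrm{im}(qk)$, the map $qk=m\langle0,1\rangle$ is the regular image along $m$ of the kernel $\langle0,1\rangle\colon K\to A\times K$ of the first projection; hence $K_Q$ is a kernel, i.e.\ normal in $Q$, by axiom A$2$. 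Moreover, since $q$ is a regular epimorphism cooperating with $qk$, the image $K_Q$ is central in $Q$.

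It remains to transport the cooperation from $K$ to $\overline{K}$. Let $c=\mathrm{coker}(k)\colon A\to C$, so that by definition $\overline{K}=\ker c$, and let $\bar q\colon Q\to Q/K_Q$ be the cokernel of $K_Q\rightarrowtail Q$. Then $\bar q\,q\,k=0$, so $\bar q q$ factors through $c$; as $\overline{K}=\ker c$ this forces $\bar q\,q\,\bar k=0$, i.e.\ $q\bar k$ factors through $\ker\bar q$. Since $K_Q$ is normal it is the kernel of its own cokernel $\bar q$, so $q\bar k$ factors through $K_Q$, say $q\bar k=\jmath s$ with $\jmath\colon K_Q\rightarrowtail Q$ and $s\colon\overline{K}\to K_Q$. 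Because $K_Q$ is central, $1_Q$ and $\jmath$ cooperate, whence $q$ and $\jmath$ cooperate (precompose the cooperator with $q\times1$), and therefore $q$ and $q\bar k=\jmath s$ cooperate (precompose further with $1\times s$). This is exactly what was needed, and combined with the first inclusion it yields $[A,K]=[A,\overline{K}]$.

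The main obstacle is the centrality of $K_Q$ in $Q$. Its normality is free from axiom A$2$, but upgrading normality to centrality is where the unital hypothesis must enter: the clean way is a lemma stating that in a unital category the regular image of a map cooperating with a regular epimorphism is central, equivalently that the cooperator $m$ descends along $q\times e$ (with $e\colon K\twoheadrightarrow K_Q$) to a cooperator $Q\times K_Q\to Q$ for $1_Q$ and $\jmath$. I would prove this descent from pullback-stability of regular epimorphisms together with the defining identity $m\,\Sigma_{A,K}=q\,[1,k]$; once centrality is in hand, every other step is formal diagram chasing.
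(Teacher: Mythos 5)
Your argument is correct, but it takes a genuinely different route from the paper's. The paper proves the equality by a direct computation in the $\flat$/$\diamond$ calculus: it realizes $\K$ as the regular image of $\chi=[1,k]n\colon A\flat K\to A$, and exhibits both $[A,K]$ and $[A,\K]$ as regular images of one and the same composite out of $A\diamond(A\flat K)$, via an auxiliary map $\varphi\colon A\flat(A\flat K)\to (A\flat A)\flat K$, a split-epi trick, Lemma \ref{epireg} (the functors $(-)\flat X$ and $(-)\diamond X$ preserve regular epis), and uniqueness of the regular-epi/mono factorization. You instead work entirely on the quotient side: $q=\mathrm{coker}\bigl([A,K]\rightarrowtail A\bigr)$, the image $K_Q$ of $qk$ is normal in $Q$ by A$2$ and central by descending the cooperator $m$ along $q\times e$; then $q\bar{k}$ factors through $K_Q$ (via $\K=\ker\,\mathrm{coker}(k)$ and $K_Q=\ker\bar{q}$), so $q$ cooperates with $q\bar{k}$, kills $A\diamond\K$, and $[A,\K]\leq\ker q=[A,K]$, with monotonicity giving the converse. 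The one step you only sketch, the descent of the cooperator, is the load-bearing point but it is sound, and your named ingredients do suffice: $q\times e$ is a regular epi by pullback-stability, its kernel pair is $R[q]\times R[e]$, and $m$ coequalizes it because the two composites agree on the jointly strongly epic axes $\langle 1,0\rangle\colon R[q]\to R[q]\times R[e]$ and $\langle 0,1\rangle\colon R[e]\to R[q]\times R[e]$, using $m\langle 1,0\rangle=q$ and $m\langle 0,1\rangle=qk=\jmath e$, both consequences of $m\Sigma=q[1,k]$ --- this axes test is exactly where unitality enters, as you predicted, and the lemma is essentially the known fact that cooperation descends along regular epimorphisms in regular unital categories (cf.\ \cite{bibbia}). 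As for what each approach buys: the paper's computation stays inside the formal-conjugate machinery and yields Lemma \ref{epireg} as reusable infrastructure, whereas your argument is conceptually lighter, makes transparent \emph{why} the commutator $[A,-]$ cannot see normalization (the quotient by $[A,K]$ centralizes the image of $K$, hence annihilates the commutator with the entire normal closure), and anticipates the cooperator-and-regular-epi technique the paper itself deploys in Theorem \ref{THM} --- all while using only the stated ideal-determined and unital hypotheses, with no hidden protomodularity.
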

\begin{proof}
Let us refer to the two diagrams below:
$$
\newdir{|>}{%
!/4.5pt/@{|}*:(1,-.2)@^{>}*:(1,+.2)@_{>}}
\newdir{ >}{{}*!/-5pt/@{>}}\xymatrix@C=7ex{
*+<2ex>{A\diamond(A\flat K)}\ar@{}[dr]|{(i)}\ar[r]^q\ar@{|>->}[d]_{D}
&*+<2ex>{A\diamond K}\ar@{}[dr]|{(ii)}\ar[r]^{p}\ar@{|>->}[d]^d
&*+<2ex>{[A,K]}\ar@{|>->}[d]^c
\\
A\flat(A\flat K)\ar[r]_{(\chi_{A}\flat A)\varphi}
&A\flat K\ar@{->>}[r]_{\chi}
&\K,}
$$

$$
\newdir{|>}{%
!/4.5pt/@{|}*:(1,-.2)@^{>}*:(1,+.2)@_{>}}
\newdir{ >}{{}*!/-5pt/@{>}}\xymatrix@C=7ex{
*+<2ex>{A\diamond(A\flat K)}\ar@{}[dr]|{(iii)}\ar[r]^{\overline{q}}\ar@{|>->}[d]_{D}
&*+<2ex>{A\diamond \K}\ar@{}[dr]|{(iv)}\ar[r]^{\overline{p}}\ar@{|>->}[d]^{\overline{d}}
&*+<2ex>{[A,\K]}\ar@{|>->}[d]^{\overline{c}}
\\
A\flat(A\flat K)\ar[r]_{A\flat \chi}
&A\flat \K\ar@{->>}[r]_{\overline{\chi}}
&\K.}
$$
The idea of the proof is to show that the compositions $pq$ and $\bar{p}\bar{q}$ are
the regular images of the same (to be proved) morphism
$\chi (\chi_{A}\flat K)\varphi D = \overline{\chi}(A\flat \chi) D$.
We shall start by defining all the characters playing in the diagrams.
\begin{itemize}
\item The arrows $D$, $d$ and $\overline{d}$ are  kernels:
$$\newdir{|>}{%
!/4.5pt/@{|}*:(1,-.2)@^{>}*:(1,+.2)@_{>}}
\xymatrix@C=7ex{A\diamond(A\flat K)\ar@{|>->}[r]^D&A\flat(A\flat K)\ar[r]^{[0,1]n}&A\flat K}
$$
$$
\newdir{|>}{%
!/4.5pt/@{|}*:(1,-.2)@^{>}*:(1,+.2)@_{>}}
\xymatrix@C=5ex{A\diamond K\ar@{|>->}[r]^d&A\flat K\ar[r]^{[0,1]n}& K,}\quad
\xymatrix@C=5ex{A\diamond \overline{K}\ar@{|>->}[r]^{\overline{d}} &A\flat \overline{K}\ar[r]^{[0,1]n}& \overline{K}.}
$$
\item The arrows $\chi$ and $\overline{\chi}$ are the regular images of the compositions:
$$
\newdir{|>}{%
!/4.5pt/@{|}*:(1,-.2)@^{>}*:(1,+.2)@_{>}}
\xymatrix@C=5ex{A\flat K\ar@{|>->}[r]^n&A+ K\ar[r]^{[1,k]}& K,}\quad
\xymatrix@C=5ex{A\flat \overline{K}\ar@{|>->}[r]^{n} &A+ \overline{K}\ar[r]^{[1,k]}& \overline{K}.}
$$
\item In diagrams $(ii)$ and $(iv)$, up-right compositions are regular epimorphisms followed
by monomorphisms, factorizing left-down compositions.
\item The arrow $\varphi$ is given by universal property of kernels, see the diagram below,
where  the lower part commutes:
$$
\newdir{|>}{%
!/4.5pt/@{|}*:(1,-.2)@^{>}*:(1,+.2)@_{>}}
\newdir{ >}{{}*!/-5pt/@{>}}
\xymatrix@C=7ex{
A\flat(A\flat K)\ar@{.>}[rr]^{\varphi}\ar@{|>->}[d]_{n}
&&(A\flat A)\flat K\ar@{|>->}[d]^{n}\\
A+(A\flat K)\ar[d]_{[1,0]}\ar[r]^{A+n}
&A+A+K\ar[r]^{[\eta_A,\eta_A]+K}
&(A\flat A)+ K\ar[d]^{n[1,0]}
\\
A\ar[rr]_{\eta_A}&&A\flat A.
}$$
In fact:
$$
[1,0] ([\eta_A,\eta_A]+K)(A+n)
=[1,0] (\eta_A K) ([1,1]+K)(A+n)
=\eta_A [1,0]  ([1,1]+K)(A+n)=
$$
$$
=\eta_A [1,1]  (A+[1,0])(A+n)
=\eta_A [1,1]  i_1[1,0]
=\eta_A [1,0].
$$

\item The arrow $q$ is given again by universal property of kernels, so that diagram $(i)$
commutes for free:
$$
\newdir{|>}{%
!/4.5pt/@{|}*:(1,-.2)@^{>}*:(1,+.2)@_{>}}
\newdir{ >}{{}*!/-5pt/@{>}}
\xymatrix@C=6ex{
A\diamond(A\flat K)\ar@{|>->}[d]_{D}\ar@{.>}[rrr]^q&&&A\diamond K\ar@{|>->}[d]^{d}
\\
A\flat(A\flat K)\ar[rr]^{\varphi}\ar@{|>->}[d]_{n}
&&(A\flat A)\flat K\ar@{|>->}[d]^{n}\ar[r]^{\chi_A\flat K}
&(A\flat K)\ar@{|>->}[d]^{n}
\\
A+(A\flat K)\ar[d]_{[0,1]}\ar[r]^{A+n}
&A+A+K\ar[r]^{[\eta_A,\eta_A]+K}\ar[d]^{[0,1_{A+K}]}
&(A\flat A)+ K\ar[d]^{[0,0,1]}
\ar[r]^{\chi_A+ K}
&(A+ K)\ar[d]^{[0,1]}\\
A\flat K\ar[r]_n
&A+K\ar[r]_{[0,1]}
&K\ar[r]_{Id}
&K.
}$$
In fact:
$$
[1,0]n(\chi_A\flat K)\varphi
=[1,0](\chi_A+ K)n\varphi
=[1,0](\chi_A+ K)([\eta_A,\eta_A]+K)(A+n)n=
$$
$$
=[0,0,1](n+ K)([\eta_A,\eta_A]+K)(A+n)n
=[0,0,1]([i_2,i_2]+K)(A+n)n=
$$
$$=[0,0,1](A+n)n
=[0,1]n[0,1]n.
$$
Moreover $q$ is a split epimorphism, and hence regular. This can be shown by precomposing
the upper part of the diagram above  with the monomorphism $A\diamond \eta_K$.
The calculation shows that $ndq(A\diamond \eta_K)=nd$, which is a monomorphism.
Canceling $nd$, one gets $q(A\diamond \eta_K)=1_{A\diamond K}$.
\item The arrow $\bar{q}$ is simply $A\diamond \chi$, so that diagram $(iii)$ commutes. Moreover
$\bar{q}$  is a regular epimorphism by
Lemma \ref{epireg}.
\end{itemize}
The discussion above has shown that diagrams $(i)$, $(ii)$, $(iii)$ and $(iv)$ commute, and
that their upper sides are regular epimorphisms. Moreover they are followed by monomorphisms. Now we are to
show that their lower sides coincide: uniqueness of the factorization will conclude the proof.

Indeed,  it is convenient to compose the morphisms that we want to prove equal,
 with the normal monomorphism
$\bar{k}$.
On one side one has: $\bar{k} \overline{\chi} A\flat \chi= [1,\bar{k}](A+\chi)n$.
The other can  be represented by the commutativity of the
diagrams below:
$$
\xymatrix@C=7ex{
A\flat(A\flat K)\ar[rr]^{(\chi_A\flat K)\varphi}\ar[d]_n
&&A\flat K\ar[r]^{\chi}\ar[d]^n
&\overline{K}\ar[d]^{\bar{k}}\\
A+(A\flat K)\ar[r]^{A+n}\ar[ddr]_{A+\chi}
&A+A+K\ar[r]^{[1,1]+k}\ar[d]_{A+[1,K]}
&A+K\ar[r]^{[1,k]}
&A
\\
&A+A\ar[urr]_{[1,1]}
\\
&A+\overline{K}\ar[u]^{A+\bar{k}}\ar[uurr]_{1,\bar{k}}
}
$$

\end{proof}

\begin{Lemma}\label{epireg}
Let the category $\C$ be ideal-determined, and let $X$ be an object of $\C$. Then
\begin{itemize}
\item the functors $(-)\flat X$ and $X\flat (-)$ preserves the regular epimorphisms.
\item the functors $(-)\diamond X$ and $X\diamond (-)$ preserves the regular epimorphisms.
\end{itemize}
\end{Lemma}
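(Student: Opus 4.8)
The plan is to reduce all four preservation statements to a single pattern. Each of the objects $B\flat C$ and $B\diamond C$ is the kernel of a morphism out of a coproduct, so I would first record the easy preservation facts and then transport them through kernels. Namely: the coproduct functors $X+(-)$ and $(-)+X$ preserve regular epimorphisms (being coproducts, they preserve connected colimits, and a regular epimorphism is the coequalizer of its kernel pair), and the product functors $X\times(-)$ and $(-)\times X$ preserve them as well, since $1_X\times f$ is a pullback of $f$ and regular epimorphisms are pullback-stable by A$1$. Throughout I would freely use that in the regular category $\C$ regular epimorphisms compose and satisfy the cancellation $gf$ regular $\Rightarrow g$ regular.

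For the passage to kernels I would isolate two situations. The \emph{clean} one is a square of kernels induced by an equation $p=p'v$ with $v$ a regular epimorphism and a \emph{common} base: given $K=\ker p\to E\xrightarrow{p}B$ and $K'=\ker p'\to E'\xrightarrow{p'}B$, I would pull back $v$ along $\ker p'\rightarrowtail E'$ and use pullback pasting to identify this pullback with $\ker p$; the induced map on kernels is then literally a pullback of $v$, hence regular by A$1$. This already settles $X\flat(-)$, where the base $X$ is fixed, $p=[1,0]\colon X+A\to X$, and $v=X+f$.

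The delicate situation is when the base varies, and here I would prove a sectioned short five lemma for regular epimorphisms: given a morphism of split short exact sequences whose middle and right verticals $v,w$ are regular epimorphisms and whose sections $s,s'$ are compatible, the induced map on kernels is regular. The argument I have in mind uses A$2$ essentially. Let $I$ be the regular image of $\ker p$ along $v$; by A$2$ it is a kernel, so $I=\ker q_I$ with $q_I=\operatorname{coker}(I)$. From $I\subseteq\ker p'$ I obtain $p'=\rho q_I$, and from $v(\ker p)=I$ I obtain $q_I v=\psi p$ (here $p,p'$ are normal epimorphisms, hence cokernels of their kernels by A$1$). A short chase gives $\rho\psi=w$, so $\rho$ and $\psi$ are regular epimorphisms. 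The sections then close the loop: $\beta:=q_I s'$ satisfies $\rho\beta=1$, while computing $\psi$ through $s$ gives $\psi=\beta\rho\psi$, whence $\beta\rho=1$; thus $\rho$ is an isomorphism, forcing $I=\ker p'$, i.e. the map on kernels is regular. Applied with $p=[1,0]\colon A+X\to A$ and section $i_A$, this settles $(-)\flat X$.

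Finally I would reduce the two $\diamond$-statements to the $\flat$-statements through the observation that $H\diamond K=\ker\!\big([0,1]\,n\colon H\flat K\to K\big)$ and that this map is a \emph{split} epimorphism, the coproduct injection $i_K$ corestricting to a section. Hence $(-)\diamond X$ is the kernel of a split epimorphism over the fixed base $X$ with middle vertical $f\flat X$, regular by the already-proven $(-)\flat X$ case, so the clean situation applies; and $X\diamond(-)$ is the kernel of a split epimorphism over the varying base $A$ with compatible sections and middle vertical $X\flat f$ regular by the $X\flat(-)$ case, so the sectioned lemma applies. The main obstacle is precisely this sectioned short five lemma: without A$2$ one cannot identify the image $I$ with a kernel and form $q_I$, and without the compatible sections one cannot produce the inverse $\beta$ that forces $I=\ker p'$ — and it is exactly these two ingredients that the ideal-determined, split setting makes available.
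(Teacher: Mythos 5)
Your proof is correct, and it takes a genuinely different route from the paper's. The paper's engine is pushouts: it shows the relevant squares of coproduct maps are pushouts (using that $[1,0]$ is the cokernel of a coproduct injection, and, for the square involving the $n$'s in the $\diamond$ case, an explicit universal-property computation with an auxiliary map $\gamma$), and then invokes ideal-determinedness to identify the regular image of the kernel with the kernel of the pushed-out cokernel; it also treats only one $\diamond$ functor and appeals to the natural isomorphism $A\diamond X\cong X\diamond A$. You instead split by whether the base of the kernel sequence is fixed or varies. In the fixed-base cases ($X\flat(-)$, and $(-)\diamond X$ via $A\diamond X=\ker\bigl([0,1]n\colon A\flat X\to X\bigr)$, where indeed $[0,1]n_{B,X}(f\flat X)=[0,1](f+X)n_{A,X}=[0,1]n_{A,X}$), your pullback-pasting argument exhibits the induced map on kernels as a pullback of $X+f$ (resp.\ of $f\flat X$), so pullback-stability of regular epimorphisms suffices --- a genuine economy, since for $X\flat(-)$ no A$2$ is needed at all, whereas the paper uses the pushout-plus-A$2$ mechanism uniformly. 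In the varying-base cases your sectioned short five lemma is sound: $I\subseteq\ker p'$ follows by cancelling the regular epi in the image factorization, $p=\operatorname{coker}(\ker p)$ holds because split epis are regular, hence normal by A$1'$, and a kernel always equals the kernel of its cokernel once every regular epi is a cokernel; the compatibilities $(f+X)i_A=i_Bf$ and $(X\flat f)\eta_A=\eta_Bf$ (the latter after cancelling the mono $n$) hold as required. Note that proving $\rho$ invertible amounts exactly to showing $p'=\operatorname{coker}(I)$, i.e.\ that the square is a pushout, so your lemma recovers the paper's key identification, but derives it from the compatible sections rather than from cokernel presentations --- and your reduction of both $\diamond$ statements to the $\flat$ ones through the split exact sequence $H\diamond K\rightarrowtail H\flat K\to K$ bypasses the paper's most computational step entirely. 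One cosmetic slip: the regularity of $\psi$ does not follow from $\rho\psi=w$ but from cancellation in $\psi p=q_Iv$, a composite of regular epis; since you only need $\psi$ epic to cancel it in $\beta\rho\psi=\psi$, this is harmless.
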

\begin{proof}
Let us consider the following two diagrams:
$$
\newdir{|>}{%
!/4.5pt/@{|}*:(1,-.2)@^{>}*:(1,+.2)@_{>}}
\newdir{ >}{{}*!/-5pt/@{>}}
\xymatrix{
*+<2ex>{X\flat A}\ar@{.>>}[r]^{m}\ar@{|>->}[d]_{n}\ar@{}[dr]|{(i)}
&*+<2ex>{Z}\ar@{>.>}[d]
\\
X+A\ar[r]^{X+f}\ar[d]_{[1,0]}\ar@{}[dr]|{(ii)}
&X+B\ar[d]^{[1,0]}\\
X\ar[r]_{1}&X}\qquad
\xymatrix{
*+<2ex>{A\flat X}\ar@{.>>}[r]^{m'}\ar@{|>->}[d]_{n}\ar@{}[dr]|{(iii)}
&*+<2ex>{Z'}\ar@{>.>}[d]
\\
A+X\ar[r]^{f+X}\ar[d]_{[1,0]}\ar@{}[dr]|{(iv)}
&B+X\ar[d]^{[1,0]}\\
A\ar[r]_{f}&B}
$$
The square diagrams $(ii)$ and $(iv)$ are pushout: the first because the arrow $f+X$
is a regular epimorphism since $f$ and $1_X$ are, the second for the fact that
the outer and the left-most rectangles below are cokernels:
$$
\newdir{|>}{%
!/4.5pt/@{|}*:(1,-.2)@^{>}*:(1,+.2)@_{>}}
\newdir{ >}{{}*!/-5pt/@{>}}
\xymatrix{
X\ar[r]^{i_X}\ar[d]&A+X\ar[r]^{f+X}\ar[d]_{[1,0]}\ar@{}[dr]|{}
&B+X\ar[d]^{[1,0]}\\
0\ar[r]&A\ar[r]_{f}&B.}
$$
Hence one can take regular images $(i)$ and $(iii)$; since the category is ideal-determined, they
will coincide with the kernels of the pushout:
$$
Z=X\flat B,\quad m= X\flat f; \qquad Z'= B\flat X, \quad m'= X\flat f.
$$
This completes the proof of the first part of the lemma.\\

In order to prove the second part of the lemma, we will point our attention to the functor
$(-)\diamond X$, as the two are naturally isomorphic.

The argument of the proof is similar to that of the first part. Let us consider the
diagram:
$$
\newdir{|>}{%
!/4.5pt/@{|}*:(1,-.2)@^{>}*:(1,+.2)@_{>}}
\newdir{ >}{{}*!/-5pt/@{>}}
\xymatrix{
*+<2ex>{X\diamond A}\ar@{.>>}[r]^{m}\ar@{|>->}[d]_{d}\ar@{}[dr]|{}
&*+<2ex>{Z}\ar@{>.>}[d]
\\
X \flat A\ar[r]^{X\flat f}\ar[d]_{ n}\ar@{}[dr]|{(v)}
&X\flat B\ar[d]^{n}\\
X+A\ar@{}[dr]|{(vi)}\ar[r]^{X+f}\ar[d]_{[0,1]}&X+B\ar[d]^{[0,1]}
\\
A\ar[r]_{f}&B.}
$$
Here, again, $(m,Z)$ is the regular image of $(X\flat f)n$, so that it suffices to show that $(v)+(vi)$
is a pushout in order to get the result.
The proof that $(iv)$ is a pushout is essentially the same we saw for $(iv)$, while for $(v)$ some
more calculations are needed.

Let two converging arrow $\alpha$ and $\beta$ be given, such that $\beta n = \alpha(X\flat f)$.
Then, by universal property of the coproduct $X+B$ we get a unique $\gamma$ such that
$\alpha \eta_B=\gamma n \eta_B$ and $\beta i_X= \gamma(X+f) i_X$, where $i_X:X\to X+A$ is the
coproduct injection.

Claim:  $\beta = \gamma(X+f)$ and $\alpha= \gamma n$.
The first claim is obtained by precomposing with coproduct injections into $A+X$.
The first one, $\beta i_X= \gamma(X+f) i_X$, is given above. For the second one, just follow
the chain of equalities:
$$
\beta i_a= \beta n \eta_A = \alpha (X\flat f) \eta_A = \alpha \eta_B f= \gamma n \eta_B f=
\gamma (X+f) n \eta_A = \gamma (X+f) i_A,
$$
where the second equality holds by hypothesis, the fourth is given above and the other by definition.
Finally we have to prove the second claim, but it is a consequence of the first one, which
justifies the middle equality in the chain below:
$$
\gamma(X\flat f) = \gamma (X + f)n = \beta n= \alpha (X\flat  f).
$$
\end{proof}
\section{(Yet) another notion of normality}
In the category of groups, one can characterize normal subgroups in many different equivalent ways. The one
we present below is surely not one of the best known. Nevertheless it has interesting implications:
 it establishes a link between the notion of normal subgroup
with that of commutator. Moreover it extends unexpectedly to the semi-abelian setting.\\

\subsection{The case of groups}
 Let  $K$ be a subgroup of a given
(multiplicative) group $A$. Then $K$ is normal in $G$ if, and only
if, $[A,K]\leq K$. {\em In fact, if $K$ is normal in $A$, for
every pair of elements $k\in K$ and $a\in A$, $ak^{-1}a^{-1}\in
K$. Hence also $kak^{-1}a^{-1}\in K$, i.e.\ all generators of
$[A,K]$ are in $K$. Conversely, whenever $[A,K]\leq K$, for every
pair of elements $k\in K$ and $a\in A$ one has
$y=k^{-1}aka^{-1}\in K$, so that $ky=aka^{-1}\in K$, i.e.\ $K$ is normal in $A$. }\\

One implication of the characterization given above holds in a quite general setting. This
is established  by  the following:

\begin{Proposition}\label{neces}
Let $\C$ be an ideal-determined unital category. If $K$ is a
normal subobject of $A$, then  $[A,K]$ is a subobject of $K$.
\end{Proposition}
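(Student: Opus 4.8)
The plan is to exploit the fact that, in an ideal-determined category, a normal subobject is in particular a clot, and to combine the associated clot action with the observation that the formal commutator $A\diamond K$ sits inside $A\flat K$. This reduces the whole statement to a short bookkeeping identity together with the monotonicity of regular images.

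First I would unwind the definition of the commutator in the present case. Since $H\vee K=A$ when $H=A$, the subobject $[A,K]$ is the regular image of the formal commutator $A\diamond K$ along $[1,k]\,\sigma_{A,K}\colon A+K\to A$. Now $A\diamond K$ is, by its very construction, a subobject of $A\flat K$ (it is the intersection of $A\flat K$ and $K\flat A$), so the inclusion $\sigma_{A,K}$ factors as $\sigma_{A,K}=n_{A,K}\,d$, where $d\colon A\diamond K\rightarrowtail A\flat K$ and $n_{A,K}\colon A\flat K\to A+K$ is the kernel inclusion. Hence $[1,k]\,\sigma_{A,K}=[1,k]\,n_{A,K}\,d$, and it suffices to show that $[1,k]\,n_{A,K}$ factors through $k$.

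The key identity is $[1,k]\,n_{A,K}=\chi_A\,(1\flat k)$. This follows from naturality of $n$ in its second variable, which gives $n_{A,A}\,(1\flat k)=(1_A+k)\,n_{A,K}$, together with $\chi_A=[1,1]\,n_{A,A}$ and the elementary equality $[1,1]\,(1_A+k)=[1,k]$. Because $\C$ is ideal-determined, the normal subobject $K$ is a clot, so there is an action $\xi\colon A\flat K\to K$ satisfying $k\xi=\chi_A\,(1\flat k)$. Combining the two, $[1,k]\,n_{A,K}=\chi_A\,(1\flat k)=k\xi$, and therefore $[1,k]\,\sigma_{A,K}=k\,\xi\,d$ factors through the monomorphism $k$.

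Finally, since $[1,k]\,\sigma_{A,K}$ factors through the mono $k\colon K\rightarrowtail A$ and regular images are monotone, the regular image $[A,K]$ of $A\diamond K$ along $[1,k]\,\sigma_{A,K}$ is a subobject of $K$, which is the assertion. The only delicate point I expect is the identity $[1,k]\,n_{A,K}=\chi_A\,(1\flat k)$, i.e.\ matching the realization map $[1,k]$ on $A\flat K$ with the canonical action $\chi_A$ restricted along $k$; once this is in place everything else is formal. As an alternative route, one may instead invoke the map $\chi\colon A\flat K\to\overline{K}$ built in the proof of Proposition \ref{kbar}, which exhibits $\overline{K}$ as the regular image of $[1,k]\,n_{A,K}$; this yields $[A,K]\le\overline{K}$ in general, and $\overline{K}=K$ exactly when $K$ is normal, giving the conclusion again.
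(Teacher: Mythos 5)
Your proof is correct, but it takes a genuinely different route from the paper's. The paper argues on the quotient side: it forms the pushout of $[1,0]$ along $[1,k]$, identifies the resulting map $p\colon A\to C$ with $\mathrm{coker}(k)$, uses that in an ideal-determined category a normal subobject is a kernel (so $k=\ker(p)$), and then factors $p$ through $q=\mathrm{coker}(j)$ via the pushout square $(\bullet)$ defining the commutator quotient, getting $pj=0$ and hence $j\leq k$. You argue instead on the subobject side: normality gives (via seminormality) a clot structure $\xi\colon A\flat K\to K$ with $k\xi=\chi_A(1\flat k)$, and your key identity $[1,k]\,n_{A,K}=\chi_A(1\flat k)$ is indeed correct --- naturality of $n$ in the second variable gives $n_{A,A}(1\flat k)=(1_A+k)\,n_{A,K}$, and $[1,1](1_A+k)=[1,k]$ --- so $[1,k]\,\sigma_{A,K}=k\,\xi\,d$ factors through the monomorphism $k$, and monotonicity of regular images (available by axiom A$1$) puts $[A,K]$ inside $K$. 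Your version buys a slightly stronger statement: it uses only that $K$ is a clot, so it applies verbatim to seminormal and clot subobjects, and it invokes neither unitality nor the kernel--cokernel correspondence; unitality enters only in identifying the Higgins commutator $[A,K]_H$ with the Huq commutator, i.e.\ with the $[A,K]$ of the statement (legitimate here since $H=A$ gives $H\vee K=A$). The paper's version, in exchange, works directly with the Huq description $q=\mathrm{coker}(j)$ and sets up exactly the pushout configuration that is reused in the proof of Theorem \ref{THM}, which is presumably why the authors chose it. One caveat on your closing alternative via the map $\chi$ of Proposition \ref{kbar}: concluding $\overline{K}=K$ for $K$ normal (i.e.\ that the image of $[1,k]n$ is contained in $K$) again reduces to the clot factorization $[1,k]n=k\xi$, so that route is not genuinely independent of your main argument, though it is harmless as a remark.
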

\begin{proof}
Let us refer to the diagram below:
$$\newdir{|>}{%
!/4.5pt/@{|}*:(1,-.2)@^{>}*:(1,+.2)@_{>}} \xymatrix@C=10ex{
*+<2ex>{A\diamond K}\ar@{->>}[r]\ar@{|>->}[d]_{\delta_{A,K} }
&*+<2ex>{[A,K]}
\ar@{|>->}[d]^j\ar@{>.>}[r]&*+<2ex>{K}\ar@{|>->}[dl]^k
\\
A+K\ar@{}[dr]|{(\bullet)}\ar@{->>}[r]^{[1,k]}\ar@{->>}[d]^{\Sigma}\ar@/_5ex/@{->>}[dd]_{[1,0]}&A\ar@{->>}[d]^q \ar@/^5ex/@{->>}[dd]^p
\\
A\times
K\ar@{}[dr]|{}\ar@{->>}[r]\ar@{->>}[d]^{\pi_A}&\frac{A}{[A,K]}\ar@{->>}[d]
\\
A\ar@{->>}[r]_p&C.
}
$$
Firstly we consider the pushout of $[1,0]$ along $[1,k]$.
Precomposition with the injection $A\rightarrowtail A+K$ forces
the other two morphisms of the pushout to be equal. Let's call
them $p\!:A\to C$.

We claim that $(C,p)$ is the cokernel of $k$. Actually, if we
denote by $i_K$ the canonical inclusion of $K$ into $A+K$, one can
compute $pk=p[1,k]i_K=p[1,0]i_K=p0=0$. Moreover, for any other
morphism such that $fk=0$, one easily shows that $f[1,k]=f[1,0]$.
Universality of pushouts gives the morphism from $C$ stating  that
$C=A/K$.

But $k$ is a kernel, so that $k=ker(p)$. Hence, in order to prove
that $[K,A]\leq K$ it suffices to show that $pj=0$. This is done
by factoring $p$ by $q=coker(j)$,
 since the square $(\bullet)$ is a pushout by definition.
 \end{proof}

In order to get the full characterization of a normal subobject
in terms of its commutator subobject, we shall move to the semi-abelian setting.
This is done in Theorem \ref{THM} below, but first we need to present the
following quite general:

\begin{Lemma}\label{alfa}
Let $\C$ be a unital category. Given a morphism $\alpha: A\times K\to A\times N $ such that
$$\pi_{A}\alpha=\pi_{A}, \qquad \alpha \langle 1,0\rangle=\langle 1,0\rangle,$$
then $\alpha=1\times r$, where $r=\pi_{N}\alpha\langle 0,1\rangle$.
Furthermore, $r$ is a regular epimorphism, if $\alpha$ is.
\end{Lemma}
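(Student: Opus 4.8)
The plan is to separate the statement into the identification $\alpha = 1\times r$ and the regularity claim, and to handle them independently. For the identification I would use that a morphism with codomain the product $A\times N$ is completely determined by its two components $\pi_A\alpha$ and $\pi_N\alpha$. The hypothesis $\pi_A\alpha = \pi_A$ already matches the first component with that of $1\times r$ (since $\pi_A(1\times r)=\pi_A$), so the entire task collapses to proving the single equation $\pi_N\alpha = r\,\pi_K$, where $\pi_K\colon A\times K\to K$ is the second projection and $r=\pi_N\alpha\langle 0,1\rangle$ as in the statement.

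To establish this equation I would invoke unitality of $\C$, which says precisely that the pair $\langle 1,0\rangle\colon A\to A\times K$ and $\langle 0,1\rangle\colon K\to A\times K$ is jointly strongly, hence jointly, epimorphic. It therefore suffices to check $\pi_N\alpha = r\,\pi_K$ after precomposition with each of these two maps. Precomposing with $\langle 1,0\rangle$ and using the second hypothesis $\alpha\langle 1,0\rangle=\langle 1,0\rangle$, both sides reduce to $0$ (the $N$-component of $\langle 1,0\rangle\colon A\to A\times N$ is zero, and likewise $\pi_K\langle 1,0\rangle=0$). Precomposing with $\langle 0,1\rangle$, the left-hand side becomes $r$ by the very definition of $r$, while the right-hand side is $r\,\pi_K\langle 0,1\rangle=r$. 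Both components agreeing, $\alpha=1\times r$ follows.

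For the \emph{furthermore} the key observation is that $r$ is a retract of $\alpha$ in the arrow category. Having proved $\alpha=1\times r$, I would record the two commuting squares expressed by $\pi_N\alpha = r\,\pi_K$ and $\alpha\langle 0,1\rangle=\langle 0,1\rangle\,r$, whose horizontal composites are the identities $\pi_K\langle 0,1\rangle=1_K$ and $\pi_N\langle 0,1\rangle=1_N$. These data exhibit $(\langle 0,1\rangle,\pi_K)$ and $(\langle 0,1\rangle,\pi_N)$ as a section and retraction displaying $r$ as a retract of $\alpha$. Since regular epimorphisms are closed under retracts in any category, regularity of $\alpha$ then forces regularity of $r$.

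I expect the first part to be entirely routine once one decides to test equality of the two components against the jointly epimorphic pair; the one point deserving care is the regularity claim, because mere unitality tells us nothing directly about how regular epimorphisms interact with products or projections. The cleanest route, and the one I would commit to, sidesteps this by reducing to the purely formal closure of regular epimorphisms under retracts. Should a self-contained argument be preferred, I would instead write $\alpha=\mathrm{coeq}(u,v)$ and verify by hand that $r=\mathrm{coeq}(\pi_K u,\pi_K v)$: any $t$ coequalizing $\pi_K u,\pi_K v$ makes $t\,\pi_K$ coequalize $u,v$, yielding $\tilde s$ with $\tilde s\,\alpha=t\,\pi_K$, whence $s=\tilde s\langle 0,1\rangle$ satisfies $s\,r=t$, and uniqueness follows because $r$ is epic as a retract of the epimorphism $\alpha$. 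This is the step where the retract structure does the real work.
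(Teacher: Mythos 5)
Your proof is correct. For the identification $\alpha=1\times r$ you follow essentially the paper's own route: unitality makes the pair $\langle 1,0\rangle,\langle 0,1\rangle$ jointly strongly (hence jointly) epimorphic, and equality of morphisms into a product is tested on the projections; your only variation is the economical reduction to the single equation $\pi_N\alpha=r\,\pi_K$, where the paper checks all four component identities (its displayed computations for case $(ii)$ in fact contain typos, writing $\langle 1,0\rangle$ where $\langle 0,1\rangle$ is meant, which your cleaner bookkeeping avoids). Where you genuinely diverge is the \emph{furthermore}: the paper merely records $\pi_N\alpha=\pi_N(1\times r)=r\,\pi_K$ and asserts that regularity of $\alpha$ passes to $r$, with no justification; in a bare finitely complete unital category this does need an argument, since regular epimorphisms there are neither pullback-stable nor closed under composition in general. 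Your observation that the squares $\alpha\langle 0,1\rangle=\langle 0,1\rangle r$ (available once $\alpha=1\times r$ is proved) and $\pi_N\alpha=r\,\pi_K$, with horizontal composites $\pi_K\langle 0,1\rangle=1_K$ and $\pi_N\langle 0,1\rangle=1_N$, exhibit $r$ as a retract of $\alpha$ in the arrow category, together with closure of regular epimorphisms under retracts --- which your explicit computation $r=\mathrm{coeq}(\pi_K u,\pi_K v)$, with uniqueness from $r$ being epic as a retract of the epimorphism $\alpha$, proves from scratch --- supplies exactly the step the paper leaves implicit, and at the right level of generality. (Note that the paper's single equation $\pi_N\alpha=r\,\pi_K$ would not suffice on its own: your factorization $s=\tilde s\langle 0,1\rangle$ also uses the other square $\alpha\langle 0,1\rangle=\langle 0,1\rangle r$, i.e.\ the full retract structure.) So your argument is not only correct but strictly more complete than the paper's on this point, and it keeps the lemma valid in any unital $\C$ without appealing to the regularity or pullback-stability that is available only in the semi-abelian context where the lemma is eventually applied.
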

 \begin{proof}
 Since  $\C$ is unital, $\alpha=1\times r$ if and only if the equality holds when precomposing with the canonical injections into the product, that is
 $$ (i) \quad\alpha \langle 1,0\rangle=(1\times r)\langle 1,0\rangle,$$
  $$ (ii) \quad\alpha \langle 0,1\rangle=(1\times r)\langle 0,1\rangle.$$
Since  $\alpha$ and $1\times r$ are morphisms to a product, it is sufficient to test the equalities above when composing  with the projections. But
   $$ (i) \,\,\pi_{_{A}}\alpha \langle 1,0\rangle=\pi_{_{A}}\langle 1,0\rangle=1= \pi_{_{A}}(1\times r)\langle 1,0\rangle, \quad
     \pi_{_{N}}\alpha \langle 1,0\rangle=\pi_{_{N}}\langle 1,0\rangle=0= \pi_{_{N}}(1\times r)\langle 1,0\rangle $$
     and
     $$ (ii) \,\,\pi_{_{A}}\alpha \langle 0,1\rangle=\pi_{_{A}}\langle 0,1\rangle=0= \pi_{_{A}}(1\times r)\langle 1,0\rangle, \quad
     \pi_{_{N}}\alpha \langle 1,0\rangle=r= \pi_{_{N}}(1\times r)\langle 1,0\rangle. $$
     Consequently $\alpha=1\times r.$
     Furthermore,  $\pi_{_{N}}\alpha=\pi_{_{N}}(1\times r)=r\pi_{_{K}}$, hence, when $\alpha$ is a regular epimorphism,
      so is $r.$
 \end{proof}

\begin{Theorem}\label{THM}
Let $\C$ be a semi-abelian category, and $K\rightarrowtail A$. Then
$K$ is a normal subobject of $A$ if, and only if, $[A,K]$ is a subobject of $K$.
\end{Theorem}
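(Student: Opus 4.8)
The plan is to prove the two implications separately. The forward direction, that $K$ normal implies $[A,K]\leq K$, is already available: it is exactly Proposition \ref{neces}, which holds in any ideal-determined unital category, and a semi-abelian category is such a category (being Barr-exact protomodular with zero object and coproducts). So the entire content of the theorem lies in the converse, where we must use the full strength of semi-abelianness — in particular Hoffmann's axiom A$3$, which is what distinguishes the semi-abelian case from the merely ideal-determined one.

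For the converse, assume $[A,K]\leq K$ and aim to show $K$ is normal in $A$. The key reduction is to pass to the normalization $(\overline{K},\bar k)$ of $(K,k)$ and to exploit Proposition \ref{kbar}, which gives $[A,K]=[A,\overline{K}]$. First I would observe that $\overline{K}$ is by construction a normal subobject containing $K$, and that $[A,\overline{K}]\leq\overline{K}$ holds automatically by the already-proved direction (Proposition \ref{neces}) applied to the normal subobject $\overline{K}$. The strategy is then to show that under the hypothesis $[A,K]\leq K$, the inclusion $K\rightarrowtail\overline{K}$ is forced to be an isomorphism, so that $K=\overline{K}$ is normal. To do this I would compare the commutator $[A,\overline{K}]$ with $K$: since $[A,\overline{K}]=[A,K]\leq K\leq\overline{K}$, the commutator of $A$ with $\overline{K}$ already lands inside the smaller subobject $K$.

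The heart of the argument is to recognize $\overline{K}/K$ as a central extension of objects and then invoke the Bourn–Gran machinery on central extensions cited in the introduction (\cite{BG3,BG2,BG1}). Concretely, the condition that $[A,\overline{K}]\leq K$ says that $A$ acts trivially on the quotient $\overline{K}/K$; since $\overline{K}$ is normal in $A$, this is precisely the statement that the extension $\overline{K}\twoheadrightarrow\overline{K}/K$ is central, i.e.\ that $\overline{K}/K$ lies in the centre of the relevant quotient. In a semi-abelian category, A$3$ guarantees that regular images of kernels whose own kernels are dominated behave like kernels, which is exactly the leverage needed to conclude that the comparison forcing $K=\overline{K}$ is an equality of normal subobjects. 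I expect the main obstacle to be making the centrality statement precise and correctly identifying which of the Bourn–Gran results applies: one must verify that $[A,\overline{K}]\leq K$ genuinely encodes centrality of the extension in the internal sense, and that Hoffmann's axiom is the ingredient that upgrades this centrality into the normality of $K$ itself. The routine verifications (that $\overline{K}$ is normal, that Proposition \ref{kbar} applies, that the various images are kernels under A$2$) are straightforward; the delicate point is the clean invocation of centrality that makes $K$ and $\overline{K}$ coincide.
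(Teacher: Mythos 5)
Your forward direction is exactly the paper's (Proposition \ref{neces}), and your opening moves for the converse also match: the paper passes to the normalization $(\overline{K},\bar k)$ and uses Proposition \ref{kbar} to get $[A,K]=[A,\overline{K}]$. But at the heart of the converse there is a genuine gap, and your centrality claim is misidentified in two ways. First, the object $\overline{K}/K$ is not available at this stage: knowing that $K$ is the kernel of $\mathrm{coker}(K\rightarrowtail\overline{K})$ amounts to knowing that $K$ is normal in $\overline{K}$, which is an instance of the very statement being proved, so your reduction is circular (the kernel of that cokernel is a priori strictly larger than $K$). Second, even granting the quotient, centrality of the extension $\overline{K}\twoheadrightarrow\overline{K}/K$ would mean that its kernel cooperates with all of $\overline{K}$, i.e.\ roughly $[\overline{K},K]_Q=0$ --- a statement that neither follows from $[A,\overline{K}]\leq K$ nor is what is needed; triviality of the induced $A$-action on the quotient is a different condition, and it is not the one the Bourn--Gran machinery is applied to. The extension the paper actually uses lives between quotients of $A$: with $q\colon A\to B=A/[A,\overline{K}]$ and $p\colon A\to C=A/\overline{K}$, the induced $\psi\colon B\to C$ is central (Theorem 2.8.11 of \cite{bibbia}) precisely because $\overline{K}$ is normal in $A$ and is being divided by its commutator with $A$; your hypothesis enters only through $\mathrm{Ker}(q)=[A,K]=[A,\overline{K}]\leq K$.

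Beyond the misidentification, you have no mechanism to conclude. Your plan is to force $K=\overline{K}$, but nothing in your sketch produces that isomorphism, and your paraphrase of Hoffmann's axiom is inverted: A$3$ does not say that ``regular images of kernels behave like kernels'' (that is A$2$); it says that if the \emph{image} monomorphism $x'$ is a kernel and $\mathrm{Ker}(l)\leq X$, then the \emph{domain} monomorphism $x$ is a kernel. The paper never proves $K=\overline{K}$ first. Instead, from the cooperator $\theta\colon B\times N\to B$ (where $N=\mathrm{Ker}\,\psi$) it builds the pullback of $p$ along $\psi$, observes that the square $A\times K\to B$ over $A\to C$ is a pushout of regular epimorphisms and hence a \emph{regular} pushout (a semi-abelian fact), so the comparison $\alpha\colon A\times K\to A\times N$ is a regular epimorphism; Lemma \ref{alfa} then gives $\alpha=1\times r$ with $r\colon K\to N$ a regular epimorphism, yielding a commutative square $qk=nr$ with $n$ a kernel and $\mathrm{Ker}(q)=[A,K]\leq K$ --- exactly the configuration to which A$3$ applies, concluding directly that $k$ is a kernel. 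The regular-pushout comparison and Lemma \ref{alfa} are the substantive content missing from your proposal; alternatively your route could be repaired by showing $\overline{K}=K\vee[A,K]$ (so that $[A,K]\leq K$ forces $\overline{K}=K$), but this paper neither states nor proves that formula, so as written the converse does not go through.
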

 \begin{proof}
  Since any semi-abelian category is both ideal-determined and unital, the necessary condition is given by the Proposition \ref{neces}.
  So we have to prove that, given
  $[A,K]\leq K\leq A,$ $K$ is  normal in $A$.

  If $p:A\to C$ is the cokernel of $k$ and $\overline{k}:\K\to A$ is the kernel of $p$,
  then $p=$coker($\bar{k}$).  As we have seen in the Proposition \ref{neces}, $p$ can be factorized as $p=\psi q$, where
  $q:A\to B$ is the cokernel of $j: [A,K]=[A,\K] \to A$ and $\psi: B\to C$ is given by the universal property of cokernels. Since $\K$ is normal in $A$,
   according to Theorem 2.8.11 of \cite{bibbia} we can say that $\psi: B=A/[A,\K]\to C=A/\K$ is central, that is
  $[B,N]=0, $ where $n:N\to B$ denotes the kernel of $\psi.$ In other words, this means that there exists a cooperator $\theta: B\times N \to B,$ such that
  $$ \theta \langle 0,1\rangle=n \qquad {\text{and}¥} \qquad  \theta \langle 1,0\rangle=1_{B}¥$$
 or, equivalentely, that the diagram
 $$
 \xymatrix{
B\times N\ar[r]^{\theta}
\ar[d]_{\pi_{B}¥}
&B\ar[d]^{\psi}\\
B\ar[r]_{\psi}
&C
}.
 $$
 is a pullback diagram.
 Then, in the following diagram
 $$
 \xymatrix{
A\times N\ar[r]^{q\times 1}\ar[d]_{\pi_{A}}&B\times N\ar[r]^{\theta}
\ar[d]_{\pi_{B}}
&B\ar[d]^{\psi}\\
A\ar[r]_{q}&B\ar[r]_{\psi}
&C
}.
 $$

 the outer rectangle is a pullback of $p=\psi q$ along $\psi.$ But,

$$\newdir{|>}{%
!/4.5pt/@{|}*:(1,-.2)@^{>}*:(1,+.2)@_{>}} \xymatrix@C=10ex{
A+K\ar@{}[dr]|{}\ar@{->>}[r]^{[1,k]}\ar@{->>}[d]^{\Sigma}\ar@/_5ex/@{->>}[dd]_{[1,0]}&A\ar@{->>}[d]^q \ar@/^5ex/@{->>}[dd]^p
\\
A\times
K\ar@{->>}[r]^{\varphi}\ar@{->>}[d]^{\pi_A}&B\ar@{->>}[d]^{\psi}
\\
A\ar@{->>}[r]_p&C.
}
$$
 since the outer and the top rectangles in the above diagram are pushouts,  the bottom one is a pushout of regular epimorphisms and then a regular pushout,
 as it happens in any semi-abelian category. This means that the morphism  $\alpha: A\times K\to A\times N $ given by the universal property of the pullback:

 $$
 \xymatrix{
 A\times K\ar[dr]_{\alpha}\ar[dd]_{\pi_{A}}\ar[rrr]^{\varphi}&&&B\ar[dd]^{\psi}\\
&A\times N\ar[r]^{q\times 1}\ar[dl]_{\pi_{A}}&B\times N\ar[ru]^{\theta}
\ar[dd]^{\pi_{B}}
&\\
A\ar[rrd]^{q}\ar[rrr]^{p}&&&C\\
&&B\ar[ru]_{\psi}
}.
 $$

 is a regular epimorphism. Furthermore,   ${\theta}({q\times 1})\alpha=\varphi$ and  ${\alpha \pi_{A}=\pi_{A}}. $ In order to apply  Lemma \ref{alfa}, we need only
 to show that $\alpha \langle 1,0\rangle=\langle 1,0\rangle$. Since are both morphisms to a pullback, it is sufficient to show that the equality holds when composing
  with the projections.
$$(\pi_{A}\alpha)\langle 1,0\rangle=\pi_{A}\langle 1,0\rangle=1_{A}=\pi_{A}\langle 1,0\rangle_{N}$$
$$\theta(q\times 1)\alpha\langle 1,0\rangle=\varphi\langle 1,0\rangle=\varphi\Sigma i_{A}=q[1,k]i_{A}=q=
\theta\langle 1,0\rangle q=\theta(q\times 1)\langle 1,0\rangle $$

the last equalities holding, by the properties of the cooperator $\theta$.
 Applying  Lemma \ref{alfa}, we get $\alpha=1\times r$, with $r$ regular epimorphism, since $\alpha$ is.
 Hence $ (q\times 1)\alpha=(q\times 1)(1\times r)=(q\times r)$  and then the following diagram commutes:

 $$
 \xymatrix{
K\ar[r]^{\langle 0,1\rangle}\ar[d]_{r}&A\times K\ar[d]^{q\times r}\ar[dr]^{\varphi}&\\
N\ar[d]_{¥}\ar[r]^{\langle 0,1\rangle}&B\times N\ar[r]^{\theta}
\ar[d]_{\pi_{B}}&B\ar[d]^{\psi}\\
0\ar[r]_{¥}&B\ar[r]_{\psi}
&C
}.
 $$
 Since the two bottom squares are pullbacks, $\theta\langle 0,1\rangle=n$. Furthermore, $\varphi\langle 0,1\rangle=\varphi\Sigma i_{K}=q[1,k]i_{K}=qk$;
 in conclusion we have the following commutative diagram:
 $$
 \xymatrix{
K\ar[d]^{k}\ar@{->>}[r]^r&N\ar[d]^{n}\\
A\ar@{->>}[r]^{q}&B
}.
 $$

 where the horizontal arrows are regular epimorphisms, the rightmost one is a normal monomorphism and the leftmost one is a monomorphism.
 But Ker$(q )=[A,K] \leq K$ by hypothesis, so we can apply Hoffmann's axiom and concude that $K$ is normal in $A.$¥

 \end{proof}

\bibliographystyle{alpha}

\bibliography{normality}

\def\cprime{$'$} \def\cprime{$'$}
\begin{thebibliography}{JMTU09}

\bibitem[AU92]{Agliano_Ursini}
P.~Agliano and A.~Ursini.
\newblock Ideals and other generalizations of congruence classes.
\newblock {\em J. Aust. Math. Soc. Ser. A}, 53:103--115, 1992.

\bibitem[BB04]{bibbia}
F.~Borceux and D.~Bourn.
\newblock {\em Mal'cev, Protomodular, Homological and Semi-Abelian Categories},
  volume 566 of {\em Math. Appl}.
\newblock Kluwer Academic, 2004.

\bibitem[BG02]{BG3}
Dominique Bourn and Marino Gran.
\newblock Centrality and normality in protomodular categories.
\newblock {\em Theory Appl. Categ.}, 9:151--165 (electronic), 2001/02.
\newblock CT2000 Conference (Como).

\bibitem[BG02a]{BG2}
Dominique Bourn and Marino Gran.
\newblock Central extensions in semi-abelian categories.
\newblock {\em J. Pure Appl. Algebra}, 175(1-3):31--44, 2002.
\newblock Special volume celebrating the 70th birthday of Professor Max Kelly.

\bibitem[BG02b]{BG1}
Dominique Bourn and Marino Gran.
\newblock Centrality and connectors in {M}altsev categories.
\newblock {\em Algebra Universalis}, 48(3):309--331, 2002.

\bibitem[BJ98]{BJ_PROT_DESC_SEMIDIRECT_P}
D.~Bourn and G.~Janelidze.
\newblock Protomodularity, descent, and semidirect products.
\newblock {\em Theory Appl. Categ.}, 4:No. 2, 37--46 (electronic), 1998.

\bibitem[BJK05]{BJK_INT_OBJ_ACTS}
F.~Borceux, G.~Janelidze, and G.~M. Kelly.
\newblock Internal object actions.
\newblock {\em Comment. Math. Univ. Carolin.}, 46(2):235--255, 2005.

\bibitem[Bou00]{B_normal}
Dominique Bourn.
\newblock Normal subobjects and abelian objects in protomodular categories.
\newblock {\em J. Algebra}, 228(1):143--164, 2000.

\bibitem[Bou04]{Bourn_commutator}
Dominique Bourn.
\newblock Commutator theory in regular {M}al\cprime cev categories.
\newblock In {\em Galois theory, {H}opf algebras, and semiabelian categories},
  volume~43 of {\em Fields Inst. Commun.}, pages 61--75. Amer. Math. Soc.,
  Providence, RI, 2004.

\bibitem[Cig10]{Alan}
Alan Cigoli.
\newblock {\em Centrality via internal actions and action accessibility via
  centralizers}.
\newblock PhD thesis, Universit\'a degli Studi di Milano, 2010.

\bibitem[CLP91]{CLP}
A.~Carboni, J.~Lambek, and M.~C. Pedicchio.
\newblock Diagram chasing in {M}al\cprime cev categories.
\newblock {\em J. Pure Appl. Algebra}, 69(3):271--284, 1991.

\bibitem[GU84]{Gumm_Ursini}
H.~Peter Gumm and Aldo Ursini.
\newblock Ideals in universal algebras.
\newblock {\em Algebra Universalis}, 19(1):45--54, 1984.

\bibitem[GVdL08]{GVdL}
Marino Gran and Tim Van~der Linden.
\newblock On the second cohomology group in semi-abelian categories.
\newblock {\em J. Pure Appl. Algebra}, 212(3):636--651, 2008.

\bibitem[Hig56]{Higg_commutator}
P.~J. Higgins.
\newblock Groups with multiple operators.
\newblock {\em Proc. London Math. Soc. (3)}, 6:366--416, 1956.

\bibitem[Huq68]{Huq68}
S.~A. Huq.
\newblock Commutator, nilpotency, and solvability in categories.
\newblock {\em Quart. J. Math. Oxford Ser. (2)}, 19:363--389, 1968.

\bibitem[JMT02]{JMT_SEMIAB}
George Janelidze, L{\'a}szl{\'o} M{\'a}rki, and Walter Tholen.
\newblock Semi-abelian categories.
\newblock {\em J. Pure Appl. Algebra}, 168(2-3):367--386, 2002.
\newblock Category theory 1999 (Coimbra).

\bibitem[JMTU09]{JMTU}
G.~Janelidze, L.~M{\'a}rki, W.~Tholen, and A.~Ursini.
\newblock Ideal-determined categories.
\newblock {\em preprint}, 2009.

\bibitem[JMU07]{JMU07}
G.~Janelidze, L.~M\'arki, and A.~Ursini.
\newblock Ideals and clots in universal algebra and in semi-abelian categories.
\newblock {\em J. Algebra}, 307:191--208, 2007.

\bibitem[JMU09]{JMU09}
G.~Janelidze, L.~M{\'a}rki, and A.~Ursini.
\newblock Ideals and clots in pointed regular categories.
\newblock {\em Appl. Categ. Structures}, 17(4):345--350, 2009.

\bibitem[JU09]{JU}
Z.~Janelidze and A.~Ursini.
\newblock Split short five lemma for clots and subtractive categories.
\newblock {\em Appl. Categ. Structures}, Online First, 2009.

\bibitem[Mag67]{Magari}
Roberto Magari.
\newblock Su una classe equazionale di algebre.
\newblock {\em Ann. Mat. Pura Appl. (4)}, 75:277--311, 1967.

\bibitem[Urs72]{U_Bit}
Aldo Ursini.
\newblock Sulle variet\`a di algebre con una buona teoria degli ideali.
\newblock {\em Boll. Un. Mat. Ital. (4)}, 6:90--95, 1972.

\bibitem[Urs73]{BIT-Speciale}
Aldo Ursini.
\newblock Osservazioni sulle variet\`a {BIT}.
\newblock {\em Boll. Un. Mat. Ital. (4)}, 7:205--211, 1973.

\end{thebibliography}

\end{document}